\newtheorem{theorem}{Theorem}[section]
\newtheorem{example}{Example}
\newtheorem{remark}{Remark}
\newcommand{\bbm}{\begin{bmatrix}}
\newcommand{\ebm}{\end{bmatrix}}
\providecommand{\keywords}[1]
{
  \textbf{\text{Keywords: }} #1
}
\begin{document}
\title{\uppercase{Finite element method coupled with multiscale finite element method for the non-stationary Stokes-Darcy model}}
\author{
 Yachen Hong
    \thanks{School of Mathematical Sciences, East China Normal University, Shanghai. ychongiris@gmail.com }
  \and
  Wenhan Zhang
    \thanks{School of Mathematical Sciences, East China Normal University, Shanghai. zwh\_hanhan@163.com.
}
  \and
 Lina Zhao
  \thanks{Department of Mathematics, City University of Hong Kong, Kowloon Tong, Hong Kong SAR.}
\and
 Haibiao Zheng
\thanks{School of Mathematical Sciences, East China Normal University, Shanghai.}
}
\date{}
\maketitle
\begin{abstract}
In this paper, we combine the multiscale finite element method to propose an algorithm for solving the non-stationary Stokes-Darcy model, where the permeability coefficient in the Darcy region exhibits multiscale characteristics.
%a partitioned time stepping scheme 
Our algorithm involves two steps: first, conducting the parallel computation of multiscale basis functions in the Darcy region. Second, based on these multiscale basis functions, we employ an implicit-explicit scheme to solve the Stokes-Darcy equations.
One significant feature of the algorithm is that it solves problems on relatively coarse grids, thus significantly reducing computational costs.
Moreover, under the same coarse grid size, it exhibits higher accuracy compared to standard finite element method.
Under the assumption that the permeability coefficient is periodic and independent of time, this paper demonstrates the stability and convergence of the algorithm. Finally, the rationality and effectiveness of the algorithm are verified through three numerical experiments, with experimental results consistent with theoretical analysis.
\end{abstract}
\keywords{Stokes-Darcy, multiscale characteristics, multiscale finite element method, multiscale basis, periodic}
%......................
\numberwithin{equation}{section} %公式按照章节编号
%...........................

\section{Introduction}

%%%%%%%%%%%%%%%%%%%% 引入Stokes-Dacry
Water pollution is a problem that the whole world is concerned about today, which poses a serious threat to human health and the sustainable development of society. Surface water and groundwater are the main components of water resources, and they are closely related and mutually transformed. Therefore, accurate simulation of surface water and groundwater is crucial. The two form an organic whole, simulating surface water or groundwater separately may lead to insufficient accuracy. Hence, coupling the simulation of surface and subsurface would be more in line with the actual situation. We commonly use the Stokes-Darcy equations to describe the movement of fluid on the surface and subsurface, where the Stokes equation simulates the fluid flow on the surface, and the Darcy equation describes porous media flow in the subsurface.
The analysis of the coupled Stokes-Darcy model has been explored in references \cite{CGHW,DMQ,DZ,HQ,MZ,LSY,SZheng,SZL,XH}. The most commonly used model incorporates the Beavers–Joseph–Saffman interface condition(BJS) \cite{BJ,MJ,Sa}.

%%%%%%%%%%%%%%%%%%%%%%%%%%%%%% 介绍多尺度现象，引入本文的模型和假设。
In practice, many industrial and scientific problems exhibit multiscale characteristics, such as the electrical conductivity of composite materials, the design of integrated circuits, and the study of fluids in porous media, and so on.
The complexity and diversity of multiscale problems have resulted in various multiscale computational methods.
For example, multiscale finite element method (MsFEM) \cite{EH,HTW,HWC,SXDJ,YDC}, generalized multiscale finite element method (GMsFEM) \cite{FEZ,WEZ},
 heterogeneous multiscale method (HMM) \cite{ELRV,EM,MuZ}, upscaling method \cite{FMV,HJ}, homogenization method \cite{MiZ,MY}, and variational multiscale method \cite{ZHSS}, and so on.

%%%%%%%%%%%%%%%%%%%%%%%%%%%%%%%%%%%%%%%%%% 本文研究的模型，以及用FEM的缺点，我们需要的达到怎样的目的，为接下来引入MSFEM做铺垫
In this paper, we investigate the non-stationary Stokes-Darcy model, in which the underground aquifer controlled by the Darcy equation demonstrates heterogeneity, with the permeability coefficient exhibiting multiscale characteristics. Specifically, in this study, we consider the permeability coefficient as a periodic function in space, independent of time.
In many studies, for the sake of simplicity, it is common to assume that the underground aquifer is homogeneous, meaning the permeability coefficient remains constant. When using traditional finite element method to solve problems, the permeability coefficient is treated as a constant within each element. This assumption is reasonable for homogeneous groundwater problems.
Nevertheless, when the subsurface aquifer exhibits heterogeneity and features multiscale characteristics.
If standard finite element method is used for solving, the mesh must be sufficiently fine to ensure obtaining accurate results. However, this results in a discretized problem with a high degree of freedom, requiring a significant amount of computational memory and CPU time. It is easy to exceed the current limits of computer resources. Therefore, our aim is to conserve computational resources while still maintaining computational accuracy. From another perspective, what we are usually concerned about is the characteristics of the solution at the macroscopic scale and the influence of the microscopic scale on the macroscopic scale, rather than all the details of the solution at the microscopic scale. Therefore, we expect to obtain the influence of the microscopic scale on the macroscopic scale without delving into the details of the small scale \cite{HWC,HTW}.

%%%%%%%%%%%%%%%%%%%%%%%%%%%%%%%%%%%%%%%%%%%% 我们模型用得方法，引出MSFEM
In this case, we are considering the combination of multiscale finite element method (MsFEM) with the standard finite element method (FEM). Specifically, we employ the finite element method to solve the problem in the Stokes region, while applying the multiscale finite element method in the Darcy region. The multiscale finite element method, proposed by scholars such as Hou, is primarily used for addressing two-dimensional second-order elliptic boundary value problems with high oscillation coefficients \cite{HWC}. First, perform the grid partitioning, then solve the local homogeneous elliptic equation in parallel on each element to construct multiscale basis functions. Through this process, information at the microscopic scale is embedded into the basis functions, which encompass rapid oscillations. Finally, solve the original problem on coarse grid elements, during which the basis functions transmit microscopic information to the macroscopic scale, resulting in a solution at the macroscopic scale. This method not only saves computational resources but also maintains computational accuracy.

%%%%%%%%%%%%%%%%%%%%%%%%%%%%%%%%%%%%%%%%%%%%%%% FEM与MsFEM区别
The main difference between multiscale finite element and finite element method lies in the basis functions they use. The finite element method typically uses basis functions that are linear, quadratic, or higher-order elements. These functions are often unrelated to the equations and do not contain specific information about the region. In contrast, the basis functions of multiscale finite element include microscale information about the region. This design allows multiscale finite element to maintain high computational accuracy even when solving on coarse grids. 

%%%%%%%%%%%%%%%%%%%%%%%%%%%%%%%%%% MSFEM 在椭圆，抛物，耦合问题中应用
When solving the elliptic equation in a single region, the multiscale finite element method was used in \cite{YDC,MS,HTW,HWC}, and related research on the application of multiscale finite element method in solving parabolic equation can be found in  \cite{EH,SXDJ}. Therefore, it can be considered that the multiscale finite element method has become quite mature in the application of single region elliptic equation and parabolic equation.
Additionally, in the context of the two-region coupling problem, Zhang, Yao, Huang, and Wang utilized the multiscale finite element method to investigate the two-phase flow problem in fractured media. They experimentally demonstrated the effectiveness and accuracy of the method, but no theoretical analysis was provided \cite{ZYHW}.
In \cite{GVY}, the mortar multiscale finite element method was applied to the Stokes-Darcy model with BJS interface conditions.
In \cite{HZZZ}, we combined the multiscale finite element method with the finite element method to study the steady-state Stokes Darcy model, where the permeability coefficient of the Darcy region has multiscale characteristics, proving the $L^2$ and $H^1$ errors of the model. 
%\hl{Next}, we study the non-stationary Stokes-Darcy model.
%This article extends the steady state problem to a non-stationary problem.

%%%%%%%%%%%%%%%%%%%%%%%%% 方法的核心
The essence of our method is the construction of basis functions for the Darcy region. In our model, the Darcy equation is a parabolic equation.
The permeability coefficient we consider is a periodic function in space, which is independent of time.
%  a rapidly oscillating periodic function in space, which is independent of time.
Therefore, the approach to constructing the basis functions can be aligned with that used for elliptic equation \cite{EH}. By removing the time derivative terms from the parabolic equation, it is transformed into an elliptic equation. Then, solving the local homogeneous elliptic equation allows us to obtain the basis functions.

%%%%%%%%%%%%%%%%%%%%% 论文结构
We present the structure of our paper as follows: in Section 2, we introduce the coupled Stokes-Darcy model. Moving on to Section 3, we provide a detailed exposition of the MsFEM-ImEx algorithm for the Stokes-Darcy problem, accompanied by stability and error estimates. Finally, in Section 4, we present the numerical experiments.
%the multiscale finite element algorithm for the Stokes-Darcy problem, accompanied by stability and error estimates. Finally, in Section 4, we present the numerical experiments.

%%%%%%%%%%%%%%%%%%%%%%%%%%%%%%%%%%%%%%%%%%%%%%%%%%%%%%%
%%%%%%%%%%%%%%%%%%%%%  第二章   %%%%%%%%%%%%%%%%%%%%%%%%%
%%%%%%%%%%%%%%%%%%%%%%%%%%%%%%%%%%%%%%%%%%%%%%%%%%%%%%%
\section{Coupled Stokes-Darcy model }
%%%%%%%%%%%%%%%%%%%%%%%%%%%%%    2.1节
\subsection{Problem}
Let us consider a fluid flow in $\Omega_{f}$ coupled with porous media flow in $\Omega_{p}$, where $\Omega_{f}$, $\Omega_{p}\subset R^{d}$(d=2) are bounded domains,
$\Omega_{f}\bigcap \Omega_{p}= \emptyset$ and $\overline\Omega=\overline\Omega_{f} \bigcup \overline\Omega_{p}$. 
Denote $\Gamma=\overline\Omega_{f} \bigcap \overline\Omega_{p}$, $\Gamma_{f}=\partial \Omega_{f}\backslash\Gamma$ and $\Gamma_{p}=\partial \Omega_{p}\backslash\Gamma$.
The vectors $\textbf{n}_{f}$ and  $\textbf{n}_{p}$ denote the unit outward normal vectors on $\partial \Omega_{f}$ and $\partial \Omega_{p}$, respectively.
And the vector $\tau_{i}(i=1,2,\cdots,d-1)$ represents the unit tangential vector on the interface $\Gamma$. Refer to Figure \ref{fig:domain} for specifics.
%%%%%%%%%%%%%%%%%%%%%%%%%%%%%%%% Domain picture
\begin{figure}
	\centering
	\includegraphics[width=0.6\linewidth]{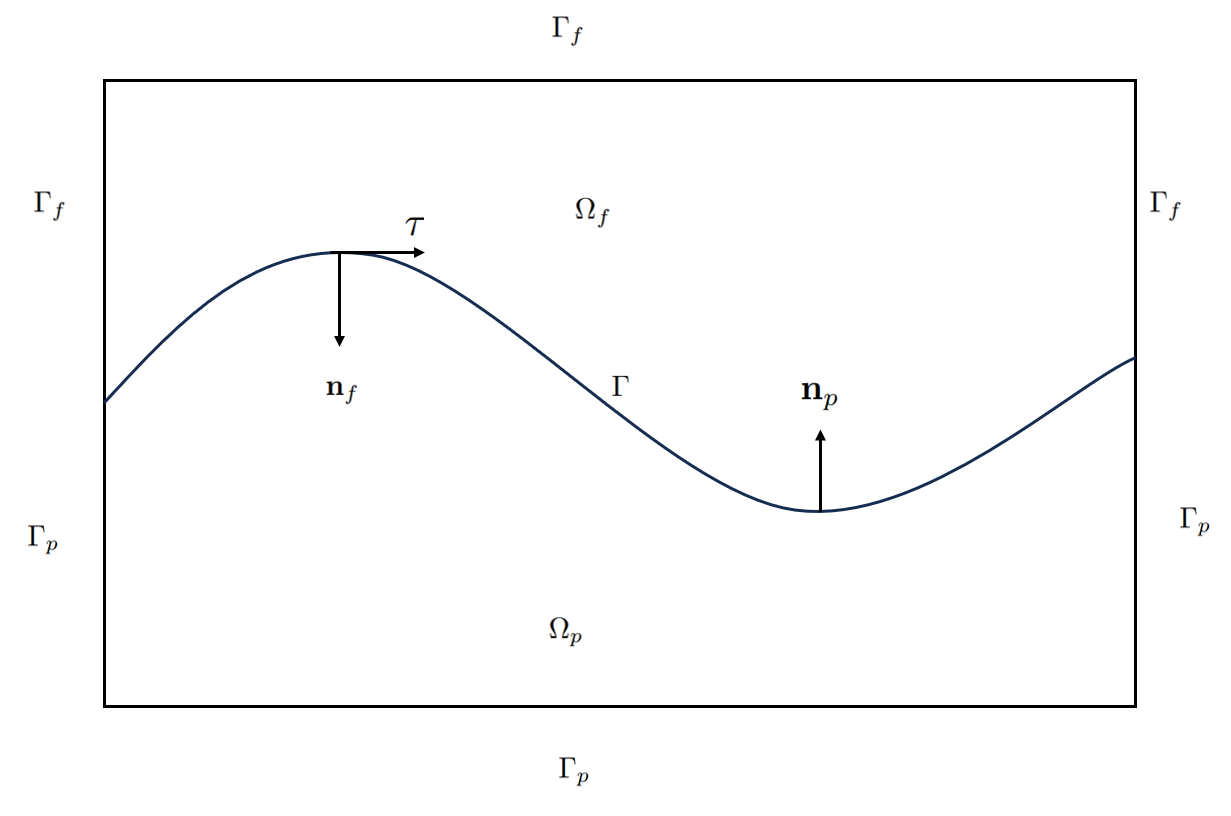}
	\caption{A global domain $\Omega$ consisting of a fluid flow region $\Omega_f$   and a porous media flow region $\Omega_p$ separated by an interface $\Gamma$.}
	\label{fig:domain}
\end{figure}
%%%%%%%%%%%%%%%%%%%%%%%%%%%%%%%%%%%%
%%%%%%%%%%%%%% Stokes 方程
Let $T\textgreater 0$ be a finite time. The fluid flow in $\Omega_{f}$ is governed by the Stokes equation:
\begin{align}  %多行对齐
   \partial_{t}\textbf{u}_{f}   - \nabla \cdot \mathbb{T}(\textbf{u}_{f} , p_{f}) &= \textbf{f}_{f} \quad& \mbox{in }\Omega_{f} \times (0,T],\\
  -\nabla  \cdot \textbf{u}_{f} &= 0 \quad &\mbox{in }\Omega_{f} \times (0,T],\\
  \textbf{u}_{f}(\textbf{x},0)&=\textbf{u}^{0} \quad&\mbox{in }\Omega_{f},\\
  \textbf{u}_{f}&=0 \quad&\mbox{on } \Gamma_{f} \times (0,T], 
\end{align}
where $\mathbb{T}(\textbf{u}_{f} , p_{f})=-p_{f}I+2\nu \mathbb{D}(\textbf{u}_{f})$, $\mathbb{D}(\textbf{u}_{f})=\frac{1}{2}(\nabla \textbf{u}_{f} + \nabla^{T} \textbf{u}_{f})$
are the stress tensor and the deformation tensor.
 $\textbf{u}_{f}$ represents the velocity of the free fluid flow, $p_{f}$ the kinetic pressure, $\nu\textgreater0$ the kinematic viscosity and $\textbf{f}_{f}$ is external force.

%%%%%%%%%%% Darcy 方程
In the porous media region $\Omega_{p}$, the porous media flow is controlled by the Darcy equation for the piezometric head $\phi_{p}$:
\begin{align}  %多行对齐
   S_{0}\partial_{t}\phi_{p}   - \nabla \cdot(\mathbb{K}^{\epsilon}(\textbf{x})\nabla\phi_{p}) & = f_{p}&\quad\mbox{in }\Omega_{p} \times (0,T],\\
  %\textbf{u}_{p} &=-\textbf{K}\nabla\phi &\quad\mbox{in }\Omega_{p} \times (0,T],\\
   \phi_{p}(\textbf{x},0)&=\phi^{0}&\quad\mbox{in }\Omega_{p},\\
   \phi_{p} &=0 \quad&\mbox{on }\Gamma_{p} \times (0,T].
\end{align} 
Here $S_{0}$ is the specific mass storativity coefficient and $f_{p}$ is the source term.
$\mathbb{K}^{\epsilon}(\textbf{x})=(K_{ij})$ represents the hydraulic conductivity tensor, which is independent of time. We always assume that $\mathbb{K}^{\epsilon}(\textbf{x})$ is symmetric and satisfies uniform boundedness: there exist two constants $\lambda_{max}\textgreater\lambda_{min}\textgreater 0$ such that
\begin{align}
\forall \xi \in R^{2},
\lambda_{min}|\xi|^{2}\le \mathbb{K}^{\epsilon}(\textbf{x})\xi\cdot \xi\le \lambda_{max}|\xi|^{2}.
%\lambda_{min}|\xi|^{2}\le \xi_{i}K_{ij}\xi_{j}\le \lambda_{max}|\xi|^{2},
\end{align}
Additionally, we also assume that $\mathbb{K}^{\epsilon}(\textbf{x})$ is periodic, with a period of $\epsilon$.
%$\mathbb{K}^{\epsilon}(\textbf{x})=(K_{ij})$ is the hydraulic conductivity tensor, $\epsilon$ is a small parameter and we assume that $\mathbb{K}^{\epsilon}(\textbf{x})$ is symmetric, perodic and satisfies uniform boundedness: there exist two constants $\lambda_{max}\textgreater\lambda_{min}\textgreater 0$ \hl{such that}
%%%%\forall \xi \in R^{2},\lambda_{min}|\xi|^{2}\le \mathbb{K}\xi\cdot\xi\le \lambda_{max}|\xi|^{2},
%here $\mathbb{K}^{\epsilon}(\textbf{x})$ is independent of time.\hl{here $K_{ij}$ is a function with a spatial period of $\epsilon$ and independent of time.}
 
%%%%%%%%%%%%%%%%%%%%%%%%%%%%%%   interface conditions
The interface conditions are key to coupling the Stokes-Darcy model, we apply the following interface conditions on $\Gamma$:
% \hl{Beavers–Joseph–Saffman condition on interface $\Gamma$:}
\begin{align}  %多行对齐
    \textbf{u}_{f} \cdot \textbf{n}_{f} + \textbf{u}_{p} \cdot \textbf{n}_{p}&= 0& &\mbox{on }\Gamma \times (0,T],\label{BJS-1}\\
  - (\mathbb{T}(\textbf{u}_{f},p_{f})\cdot\textbf{n}_{f})\cdot\textbf{n}_{f}&=g(\phi_{p}-z)& &\mbox{on }\Gamma \times (0,T],\label{BJS-2}\\
  - (\mathbb{T}(\textbf{u}_{f},p_{f})\cdot\textbf{n}_{f})\cdot\textbf{$\tau$}_{i}&= \frac{\nu\alpha \sqrt{d}}{\sqrt{trace(\prod)}} \textbf{$\tau$}_{i} \cdot \textbf{u}_{f}
& & i=1,...,d-1 \quad \mbox{on }\Gamma \times (0,T],\label{BJS-3}
\end{align} 
where $\alpha$ is a non-negative constant, $g$ the gravitational acceleration, $\nu$ the kinematic viscosity of the fluid,
and $\prod$ the intrinsic permeability that satisfies the relation $\prod=\frac{\mathbb{K}^{\epsilon}(\textbf{x})\nu}{g}$.
The first condition (\ref{BJS-1}) guarantees the mass conservation across the interface $\Gamma$. The second one (\ref{BJS-2}) represents the balance of the normal force.
The last equation (\ref{BJS-3}) is the Beavers–Joseph–Saffman interface condition. It is a slip condition of velocity.

%%%%%%%%%%%%%%%%%%%%%%%%%%%%%%%%%%%%%%%%%%%%%%%%%%%%%%%  2.2节
\subsection{Weak formulation of Stokes-Darcy model}
First of all, we define some Hilbert spaces:
\begin{align*} 
\textbf{X}_{f}=& \{ \textbf{v} \in (H^{1}(\Omega_{f}))^{d}: \textbf{v}=0 \enspace  on  \enspace \Gamma_{f}\},\\
X_{p}=& \{ \psi \in H^{1}(\Omega_{p}): \psi=0 \enspace  on \enspace  \Gamma_{p} \}, \\
 Q_{f}=& L^{2}_{0}(\Omega_{f})= \{ q \in L^{2}(\Omega_{f}): 
 \int_{\Omega_{f}} q=0 \},
\end{align*} 
and the product Hilbert spaces: 
\begin{align*}
\textbf{L}^{2}=(L^{2}(\Omega_{f}))^{d} \times L^{2}(\Omega_{p}), \ 
\textbf{U}=\textbf{X}_{f} \times X_{p}.
\end{align*}
From now on, we use $(\cdot,\cdot)_{D}$ and $\|\cdot\|_{D}$ to denote the $L^{2}$
inner product and the corresponding norm on any given
domain $D$. Sometimes, for simplicity and without loss of context, we also denote it as $(\cdot,\cdot)$.

The space $Q_{f}$ is equipped with the following norm:
$$\|q\|_{0}:=\|q\|_{Q_{f}}=\|q\|_{L^{2}(\Omega_{p})}, \forall q\in Q_{f}.$$
Let us denote $\underline{\textbf{v}}=(\textbf{v},\psi) \in \textbf{U}$. We equip the space $\textbf{U}$ with the following norm:
$$\|\underline{\textbf{v}}\|_{0}:=(\|\textbf{v}\|_{(L^{2}(\Omega_{f}))^{d}}^{2}+\|\psi\|_{L^{2}(\Omega_{p})}^{2}),\forall\underline{\textbf{v}}=(\textbf{v},\psi) \in \textbf{U}.$$
Particularly, we use the following notations to represent the norms:
$$\|\textbf{v}\|_{0}:=\|\textbf{v}\|_{L^{2}(\Omega_f)}, \|\nabla \textbf{v} \|_{0}= \| \textbf{v} \|_{1} := \|\textbf{v}\|_{\textbf{X}_{f}(\Omega_{f})}, $$
 $$\|\psi\|_{0}:=\|\psi\|_{L^{2}(\Omega_p)},
 \|\nabla \psi \|_{0}= \| \psi \|_{1} := \|\psi\|_{X_{p}(\Omega_{p})}. $$
Assume that $\textbf{f}_{f} \in L^{2}(0,T;L^{2}(\Omega_{f})^{d})$, $f_{p}\in L^{2}(0,T;L^{2}(\Omega_{p}))$. Then the weak formulation for the non-stationary Stokes-Darcy problem is : 
find $\underline{\textbf{u}}=(\textbf{u},\phi)\in ((L^{2}(0,T;\textbf{X}_{f})\cap L^{\infty}(0,T;L^{2}(\Omega_{f})^{d})) \times ((L^{2}(0,T;X_{p})\cap L^{\infty}(0,T;L^{2}(\Omega_{p})))$ and
$p\in L^{2}(0,T;Q_{f})$, such that 
%$\forall t \in(0,T]$
\begin{align} %多行对齐
   (\partial_{t} \textbf{\underline{u}},\textbf{\underline{v}}) +a(\textbf{\underline{u}},\textbf{\underline{v}}) 
   +b(\textbf{\underline{v}},p)
   & =(\textbf{F},\textbf{\underline{v}})  \quad 
 \forall \textbf{\underline{v}} \in \textbf{U},\notag \\
   b(\textbf{\underline{u}},q)& =0
   \quad \qquad \forall q \in Q_{f},\label{wf} \\ %wf:weak formulation
\textbf{\underline{u}}(0) &=\textbf{\underline{u}}^{0}, \notag
\end{align}
where
\begin{align*}
     (\partial_{t} \textbf{\underline{u}},\textbf{v}) &=(\partial_{t}\textbf{u},\textbf{v})_{\Omega_f}  
     +(\partial_{t} \phi,\psi)_{\Omega_p} ,                        \\
      a(\textbf{\underline{u}},\textbf{\underline{v}}) &= a_{f}(\textbf{u},\textbf{v})+a_{p}(\phi,\psi)
      +a_{\Gamma}(\textbf{\underline{u}}, \textbf{\underline{v}}) ,        \\
     a_{f}(\textbf{u},\textbf{v}) &= 
     2\nu (\mathbb{D}(\textbf{u}),\mathbb{D}(\textbf{v}))_{\Omega_f}
     + \int_{\Gamma} \frac{\nu \alpha \sqrt{d}}{\sqrt{trace(\prod)}} (\textbf{u} \cdot \tau_{i})(\textbf{v} \cdot \tau_{i}),      \\
     a_{p}(\phi,\psi) &= \frac{1}{S_{0}}(\mathbb{K}^{\epsilon}(\textbf{x}) \nabla \phi ,\nabla \psi)_{\Omega_p} ,\\
 a_{\Gamma}(\textbf{\underline{u}},\textbf{\underline{v}}) &= 
     g \int_{\Gamma} \phi\textbf{v} \cdot \textbf{n}_{f} 
     -\frac{1}{S_{0}} \int_{\Gamma}\textbf{u}\cdot \textbf{n}_{f} \psi,                                                          \\
     b(\textbf{\underline{v}},p)&= -(p,\nabla \cdot \textbf{v})_{\Omega_{f}} ,                                                      \\
     (\textbf{F},\textbf{\underline{v}}) &= (\textbf{f}_{f},\textbf{v})_{\Omega_f} + \frac{1}{S_{0}} (f_{p},\psi)_{\Omega_p} . \\  
\end{align*}
\begin{remark}
The well-posedness of the coupled problem (\ref{wf}) is given in previous literatures \cite{DMQ,LSY} and will not be explained in this paper.
\end{remark}
%\hl{The} well-posedness of the Stokes-Darcy model (\ref{wf}) can be found in \cite{DMQ,LSY} for the stationary case and is assumed to hold similarly for the non-stationary case. 
%%\hl{In} this paper we assume $\mathbb{K}$is symetric and satisfies uniform boundedness.although we assume $K_{ij}$ is a function with a spatial period of $\epsilon$ and independent of time,it is only a special case.

%%%%%%%%%%%%%%%%%%%%%%%%%%%%%%%%%           2.3节
\subsection{Time and space discretization}
 %\hl{(Finite element and multiscale finite element approximations)}
%%%%%%%%%%%%%%%%% time 
We construct the regular triangulation $\mathcal{T}_{fh}$, $\mathcal{T}_{ph}$ of $\Omega_{f}$ and $\Omega_{p}$, where $\overline{\Omega}_{p}$=$\cup_{K \in \{{\mathcal{T}_{ph}}\} } K $. 
At the same time, we uniformly divide the time interval $[0, T]$ into $N$ subintervals $[t^{n}, t^{n+1}]$ for $n = 0, 1, \ldots, N-1$, such that $0 = t^{0} < t^{1} < \cdots < t^{N} = T$. Here $\Delta t = t^{n+1} - t^{n}$ represents the time step length.

%%%%%%%%%%%%%%%%%%%%%%%%%%  空间
Let $\textbf{X}_{fh} \subset \textbf{X}_{f}$, and $Q_{fh} \subset Q_{f}$ be finite element spaces such that the space pair $(\textbf{X}_{fh},Q_{fh})$ satisfies the discrete LBB condition: there exists a constant $\beta >0$, independent of mesh size, such that
\begin{align}\label{LBB}
\underset{0 \neq q_{h}\in Q_{fh}}{inf}  \underset{0 \neq \textbf{v}_{h}\in \textbf{X}_{fh}}{sup}  
\frac{(q_{h},\nabla \cdot \textbf{v}_{h})_{\Omega_{f}}}{\|\textbf{v}_{h}\|_{1}  \|q_{h}\|_{0}}> \beta.
\end{align}
We denote $X_{ph}\subset X_{p}$ and $\textbf{U}_{h}=\textbf{X}_{fh}\times X_{ph}$.
Here we  choose MINI finite element pair for $(\textbf{X}_{fh}, Q_{fh})$ and multiscale finite element for $X_{ph}$. Next we introduce the multiscale finite element basis functions for the Darcy region.

%%%%%%%%%%%%%%%%%%%%%%    method from Hou
% a rapidly oscillating periodic function in space, which is independent of time.
In this paper, although the Darcy equation is a parabolic equation, the permeability coefficient we consider is a periodic function in space, which is independent of time.
 So the approach to constructing the multiscale basis functions can be aligned with that used for elliptic equation. In reference \cite{HWC}, Hou constructed multiscale basis functions that primarily used for addressing two-dimensional second-order elliptic boundary value problems with high oscillation coefficients. Drawing inspiration from his method, we next present the steps for constructing multiscale basis functions for the Darcy region in this paper.

%%%%%%%%%%%%%  构造多尺度基
 First, by removing the time derivative term from the Darcy equation, it is transformed into an elliptic equation. Next, in each triangulation element $K$, define a set of node basis functions  $\{\eta_{K}^{i},i=1,2,3\}$. To ensure that the basis functions incorporate the microscale information of the permeability coefficients in local elements, we set $\eta_{K}^{i}$ to satisfy the following local homogeneous sub-problem:
\begin{align}\label{basis}
 - \nabla \cdot(\mathbb{K}^{\epsilon}(\textbf{x})\nabla\eta_{K}^{i})  =0 \quad \mbox{in } K \in \mathcal{T}_{ph}.
\end{align}
Let $\textbf{x}_j$ $(j=1,2,3)$ denote the nodal points of $K$. Here define $\eta_{K}^{i}(\textbf{x}_j)=\delta_{ij}$, indicating that $\eta_{K}^{i}(\textbf{x}_i)=1$ when $i=j$ and $\eta_{K}^{i}(\textbf{x}_j)=0$ when $i\neq j$. To ensure the well-posedness of problem (\ref{basis}), specifying boundary conditions for $\eta_{K}^{i}$ on the boundary of $K$ becomes necessary.
Thus, in this paper, we consider the multiscale basis function $\eta_{K}^{i}=\theta_{K}^{i}$ on $\partial K$, where $\theta_{K}^{i}$ is a linear function defined on $\partial K$.
In conclusion, $\eta_{K}^{i}$ satisfies the following equation:
% we consider the multiscale basis function $\eta_{K}^{i}$ varies linearly on $\partial K$, that is, imposing linear boundary conditions $\theta_{K}^{i}$. Here $\theta_{K}^{i}$ is a linear function defined on $\partial K$.
\begin{align}\label{Kbasis}
\begin{cases}%加大括号
    - \nabla \cdot(\mathbb{K}^{\epsilon}(\textbf{x})\nabla\eta_{K}^{i})  =0 \quad &\mbox{in } \quad K,\\
   \eta_{K}^{i}=\theta_{K}^{i}, & on \quad\partial K,\\
   \eta_{K}^{i}(x_{j})=\delta_{ij} .\\
\end{cases}
\end{align}

Finally, we apply the standard finite element method to solve problem (\ref{Kbasis}) on the local element $K$.
The solution region $K$ is partitioned into a finer mesh $\mathcal{T}_{ph_{fine}}^K$ with a mesh size of $h_{fine}$. Subsequently, the P1 element is utilized to solve the equations (\ref{Kbasis}) \cite{HWC,HTW}.

Hence, we can obtain a space consisting of multiscale basis functions in the Darcy region: $$X_{ph}=span\{\eta_{K}^{i},i=1,2,3,K \in \mathcal{T}_{ph} \}.$$

%%%%%%%%%%%%%%%%%%%   不等式
Moreover, we provide the trace, $\text {Poincaré}$ and Korn's inequalities: there exist constants $C_{t}$,$C_{p}$,$C_{1}$ that only depend on the region $\Omega_{f}$, and 
$\widetilde{C}_{t}$, $\widetilde{C}_{p}$ that only depend on the region $\Omega_{p}$,     
 such that for all $\textbf{v}_{h} \in \textbf{X}_{fh}$ and $\psi_{h}\in X_{ph}$,
\begin{align*}
&\|\textbf{v}_{h}\|_{L^{2}(\Gamma)} \leq C_{t}\|\textbf{v}_{h}\|_{0}^{\frac{1}{2}}\|\textbf{v}_{h}\|_{1}^{\frac{1}{2}},&
&\|\psi_{h}\|_{L^{2}(\Gamma)} \leq\widetilde{C}_{t}\|\psi_{h}\|_{0}^{\frac{1}{2}}\|\psi_{h}\|_{1}^{\frac{1}{2}},\\
&\|\textbf{v}_{h}\|_{0}\leq C_{p}\|\textbf{v}_{h}\|_{1},&
&\|\psi_{h}\|_{0}\leq \widetilde{C}_{p}\|\psi_{h}\|_{1},\\
&(\mathbb{D}(\textbf{v}_{h}),\mathbb{D}(\textbf{v}_{h}))\geq C_{1}\|\textbf{v}_{h}\|_{1}^{2}.&
\end{align*}

%%%%%%%%%%%%%%%%%%%%%%%%%%%%%%%%%%%%%%%%%%%%%%%%%%%%%%%%%%%%%%%%%%%%%%%%%%
%%%%%%%%%%%%%%%%%%%%%%%%%%%%%%%%%%%%%%%%%%% 第三章  %%%%%%%%%%%%%%%%%%%%%%%%%%
%%%%%%%%%%%%%%%%%%%%%%%%%%%%%%%%%%%%%%%%%%%%%%%%%%%%%%%%%%%%%%%%%%%%%%%%%%
\section{The multiscale finite element method for the Stokes-Darcy model}
%%%%%%%%%%%%%%%%%%%%%%%%%%%%%%%%%%%%%%%   3.1节
\subsection{Numerical algorithm}
In this part, we present the algorithm for solving the non-stationary Stokes-Darcy model, where the Darcy region exhibits multiscale phenomena. We divide it into two processes: offline and online. 
Before formally presenting the algorithm, we first compute the multiscale basis functions for the Darcy region(Pre-work).
%The Algorithm \ref{alg:1} aims to construct the multiscale basis functions in the Darcy region, and we refer to it as the offline process.
%%%%%%%%%%%%%%%%%%%%%%%%%%%%%%%%%%%%%%%%%%%%%%%%%   算法1：offline
\floatname{algorithm}{}   %去掉默认的标题Algorithm
\begin{algorithm}[H]%禁止浮动
\renewcommand*{\thealgorithm}{} % 清空默认的标题序号，*避免了产生空格
	\caption*{\textbf{Pre-work}: MsFEM basis in Darcy region (offline)} 
	\label{Pre-work} 
	\begin{algorithmic}%[1] %这个1 表示每一行都显示数字
	      \Require Coarse mesh $\mathcal{T}_{ph}$, the hydraulic conductivity tensor $\mathbb{K}^{\epsilon}(\textbf{x})$ and a small mesh size $h_{fine}$(we need $h_{fine} \ll$ $\epsilon$).
		\State Initialize the stiffness matrix $A_{1}$, mass matrix $A_{2}$ and the vector $B$ of the multiscale basis.
		\label{ code:fram:extract }%对此行的标记，方便在文中引用算法的某个步骤		
		\For{$K\in\mathcal{T}_{ph}$ in parallel}
		\State Build $\mathcal{T}_{ph_{fine}}^K$ (meshing $K$ with mesh size $h_{fine}$)
		\For{$i \in K$ in parallel} 
		\State Solve with P1 FEM 
             $$-\nabla\cdot(\mathbb{K}^{\epsilon}(\textbf{x})\nabla \eta_K^{i})=0 \quad in \  K, 
		\quad \eta_K^{i}=\theta_K^{i}\quad on\ \partial K,$$
		\State where $\theta_K^{i}$ is the standard P1 FEM basis function, and store $\eta_K^{i}$.
		\EndFor	
		\State Compute and store $(A_{1})_{i,j}^{K}=\int_{K}(\mathbb{K}^{\epsilon}(\textbf{x})\nabla \eta_K^i)(\nabla\eta_K^j),$ $ (A_{2})_{i,j}^{K}=\int_{K} \eta_K^i \eta_K^j$ 
\State and $B_i^K=B_i^K+\int_{K}\eta_K^i f_p$
		with $A_{1}^{K}$, $A_{2}^{K}$ the local matrix and the local $B^K$ vector. %算法的返回值
		\EndFor
		%\RETURN  
		\Ensure Assemble and store $A_{1}$, $A_{2}$ and $B$ associated with the multiscale basis.
	\end{algorithmic}
\end{algorithm}
%Compute and store the integrals for local matrices in element $K$:$ (A_{1})_{i,j}^{K} = \int_{K} (\mathbb{K}^{\epsilon}(\textbf{x})\nabla \eta_K^i)(\nabla\eta_K^j),\quad (A_{2})_{i,j}^{K} = \int_{K} \eta_K^i \eta_K^j, $ Update the local vector $B_i^K$ as:$ B_i^K = B_i^K + \int_{K} \eta_K^i f_p, $ where $A_{1}^{K}$ and $A_{2}^{K}$ represent the local matrices, and $B^K$ is the local vector.

%Partitioned time stepping Stokes-Darcy (online phase)} 
%%%%%%%%%%%%%%%%%%%%%%%%%%%%%%%%%%%%%%%%%%%%%%%%%   算法2：online
%The Algorithm \ref{alg:1} is employed to solve the Stokes-Darcy problem, and we refer to it as the online process.
%the implicit-explicit partitioned time stepping scheme
Next, we present an implicit-explicit(ImEx) scheme based on the multiscale finite element method for the Stokes-Darcy model.
%Partitioned time stepping scheme for solving the Stokes-Darcy model using  \qquad MsFEM(online phase)
\begin{algorithm}[H] 
     \caption{}
     \caption*{\textbf{Algorithm 1} MsFEM-ImEx}
	\label{alg:1} 
	\begin{algorithmic}%[1] %这个1 表示每一行都显示数字	
      % \REQUIRE Coarse mesh $\mathcal{T}_H$,the matix $A_{1},A_{2}$, the vector $B$, the initial value $\mathbf{u}_{h}^0, p_h^0,\phi_h^0$ and parameters of equations.\\
       \State $\bullet$ \textbf{offline:} Pre-work %\Statex
       \State $\bullet$ \textbf{online:} solve the Stokes-Darcy problem
        \For{$n=0,1\cdots N-1 $}
       \State Find $(\textbf{u}_{h}^{n+1},p_{h}^{n+1})\in \textbf{X}_{fh}\times Q_{fh}$ such that $\forall (\textbf{v}_{h},q_{h})\in \textbf{X}_{fh}\times Q_{fh} $,
       \begin{align}
        (\frac{\textbf{u}_{h}^{n+1}-\textbf{u}_{h}^{n}}{\Delta t},\textbf{v}_{h})+a_{f}(\textbf{u}_{h}^{n+1},\textbf{v}_{h})+b(\textbf{v}_{h},p_{h}^{n+1})&=(\textbf{f}_{f}^{n+1},\textbf{v}_{h})-g\int_{\Gamma}\phi_{h}^{n}\textbf{v}_{h}\cdot \textbf{n}_{f},\label{Sa1} \\  %%%Stokes algorithm (1)
b(\textbf{u}_{h}^{n+1},q_{h})&=0.  \label{Sa2} %%%Stokes algorithm (2)
\end{align}
        \State Find $\phi_{h}^{n+1}\in X_{ph}$ such that $\forall \psi_{h}\in X_{ph}$,
     \begin{align}
     (\frac{\phi_{h}^{n+1}-\phi_{h}^{n}}{\Delta t},\psi_{h})+a_{p}(\phi_{h}^{n+1},\psi_{h})=\frac{1}{S_{0}}(f_{p}^{n+1},\psi_{h})+\frac{1}{S_{0}}\int_{\Gamma} \textbf{u}_{h}^{n}\cdot \textbf{n}_{f} \psi_{h}. \label{Da}
     \end{align}
       \EndFor

     \end{algorithmic}
\end{algorithm}

%%%%%%%%%%%%%%%%%%%%%%%%%%%%%%%%%%%%%%%%%%%%%%%%%%%%%%%%%      3.2节     Stability
\subsection{Stability analysis}
In this part, we prove the stability result of Algorithm \ref{alg:1} (MsFEM-ImEx).
\begin{theorem}[Stability]\label{Stability}
Under the assumption $$\widetilde{C} \Delta t \textless 1, \ \widetilde{C}=\frac{C_{t}^{2}\widetilde{C}_{t}^{2}}{2\sqrt{S_{0}^{3}C_{1}\nu \lambda_{min}}}+\frac{g^{2}C_{t}^{2}\widetilde{C}_{t}^{2}\sqrt{S_{0}}}{\sqrt{2C_{1}\nu \lambda_{min}}},$$
we have the following stability result:
\begin{equation}\label{stability_result}
{
\begin{aligned}
&\|\textbf{u}_{h}^{N}\|_{0}^{2} + \|\phi_{h}^{N}\|_{0}^{2}
 +\sum\limits_{n=0}^{N-1}(\|\textbf{u}_{h}^{n+1}-\textbf{u}_{h}^{n}\|_{0}^{2} +\|\phi_{h}^{n+1}-\phi_{h}^{n}\|_{0}^{2}) 
 + \frac{C_{1}\nu\Delta t}{2}\|\textbf{u}_{h}^{N}\| _{1}^{2}
 + \frac{\lambda_{min}\Delta t}{2S_{0}}\| \phi_{h}^{N}\| _{1}^{2}   \\
 &\qquad \quad  \le C(T) (
\frac{C_{p}^{2}}{3C_{1}\nu}\Delta t \sum\limits_{n=0}^{N-1}\|\textbf{f}_{f}^{n+1}\|_{0}^{2}   
+\frac{\widetilde{C}_{p}^{2}}{S_{0}\lambda_{min}}\Delta t \sum\limits_{n=0}^{N-1} \|f_{p}^{n+1}\|_{0}^{2} \\
&\qquad \qquad +\|\textbf{u}^{0}\|_{0}^{2} + \|\phi^{0}\|_{0}^{2}
+ \frac{C_{1}\nu\Delta t}{2}\|\textbf{u}^{0}\| _{1}^{2}
 + \frac{\lambda_{min}\Delta t}{2S_{0}}\| \phi^{0}\|_{1}^{2}), 
\end{aligned}
}
\end{equation}
where $C(T)$ is a positive constant which depends on the final time $T$. The constants $C_{1}, C_{t}, \widetilde{C}_{t}, {C}_{p}$ and $\widetilde{C}_{p}$ are respectively associated with Korn's inequality,
the trace inequality, and the $\text {Poincaré}$ inequality.
\end{theorem}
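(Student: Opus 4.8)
The plan is to use the discrete energy method. First I would test the momentum equation (\ref{Sa1}) with $\textbf{v}_h=\textbf{u}_h^{n+1}$ and the incompressibility equation (\ref{Sa2}) with $q_h=p_h^{n+1}$; since $\textbf{u}_h^{n+1}$ is discretely divergence free, the pressure coupling $b(\textbf{u}_h^{n+1},p_h^{n+1})$ drops out, removing $p_h^{n+1}$ from the estimate entirely. Likewise I would test the Darcy equation (\ref{Da}) with $\psi_h=\phi_h^{n+1}$. For each time-derivative term I would apply the polarization identity $(\tfrac{a^{n+1}-a^{n}}{\Delta t},a^{n+1})=\tfrac{1}{2\Delta t}(\|a^{n+1}\|_0^2-\|a^{n}\|_0^2+\|a^{n+1}-a^{n}\|_0^2)$, which produces the telescoping $L^2$ energy together with the squared-increment terms $\|\textbf{u}_h^{n+1}-\textbf{u}_h^{n}\|_0^2$ and $\|\phi_h^{n+1}-\phi_h^{n}\|_0^2$ that appear on the left of (\ref{stability_result}). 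The diffusive parts are bounded below by coercivity: Korn's inequality gives $a_f(\textbf{u}_h^{n+1},\textbf{u}_h^{n+1})\ge 2\nu C_1\|\textbf{u}_h^{n+1}\|_1^2$ (the Beavers--Joseph--Saffman slip term is nonnegative since $\alpha\ge0$ and may be discarded), while the uniform lower bound on $\mathbb{K}^{\epsilon}$ gives $a_p(\phi_h^{n+1},\phi_h^{n+1})\ge \tfrac{\lambda_{min}}{S_0}\|\phi_h^{n+1}\|_1^2$.

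The crux of the argument is the explicit (time-lagged) interface coupling: equation (\ref{Sa1}) contributes $-g\int_\Gamma\phi_h^{n}\,\textbf{u}_h^{n+1}\cdot\textbf{n}_f$ and equation (\ref{Da}) contributes $\tfrac{1}{S_0}\int_\Gamma\textbf{u}_h^{n}\cdot\textbf{n}_f\,\phi_h^{n+1}$. Because the coupling data are evaluated at the old level $n$ while the test functions sit at level $n+1$, these two contributions do not cancel as they would in the fully coupled continuous energy balance. I would control each one by Cauchy--Schwarz on $\Gamma$ followed by the trace inequalities $\|\textbf{v}_h\|_{L^2(\Gamma)}\le C_t\|\textbf{v}_h\|_0^{1/2}\|\textbf{v}_h\|_1^{1/2}$ and $\|\psi_h\|_{L^2(\Gamma)}\le\widetilde{C}_t\|\psi_h\|_0^{1/2}\|\psi_h\|_1^{1/2}$, and then a weighted Young's inequality chosen so that the $H^1$ factors are absorbed into the velocity dissipation ($\propto C_1\nu$) and the Darcy dissipation ($\propto\lambda_{min}/S_0$) at the appropriate time levels, while the leftover $L^2$ factors are multiplied by $\Delta t$ and retained for Gronwall. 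The product $C_t^2\widetilde{C}_t^2$ and the geometric-mean denominator $\sqrt{C_1\nu\lambda_{min}}$ in the constant $\widetilde{C}$ arise precisely from balancing the two dissipations against one another, and the factor $g^2$ distinguishes the contribution of (\ref{Sa1}) from that of (\ref{Da}).

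The forcing terms $(\textbf{f}_f^{n+1},\textbf{u}_h^{n+1})$ and $\tfrac{1}{S_0}(f_p^{n+1},\phi_h^{n+1})$ I would handle by Cauchy--Schwarz, Poincaré, and Young, again spending a fixed fraction of the two dissipations; this yields the coefficients $\tfrac{C_p^2}{3C_1\nu}$ and $\tfrac{\widetilde{C}_p^2}{S_0\lambda_{min}}$ on the right of (\ref{stability_result}). After multiplying the per-step inequality by $\Delta t$, I would sum over $n=0,\dots,N-1$: the $L^2$ terms telescope to $\|\textbf{u}_h^{N}\|_0^2+\|\phi_h^{N}\|_0^2$ plus the initial data, the increments accumulate, and among the summed (nonnegative) dissipation I would keep only the final-level terms $\tfrac{C_1\nu\Delta t}{2}\|\textbf{u}_h^{N}\|_1^2$ and $\tfrac{\lambda_{min}\Delta t}{2S_0}\|\phi_h^{N}\|_1^2$, discarding the remaining positive summands. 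The time-step hypothesis $\widetilde{C}\Delta t<1$ guarantees that the coefficient of the current-level energy stays positive once the interface residuals are moved to the left, so a discrete Gronwall lemma closes the estimate with a constant $C(T)$ depending on the final time.

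The main obstacle I anticipate is exactly this interface estimate: one must absorb the time-lagged $H^1$ traces into dissipation terms living at a shifted time index (the Darcy coupling carries $\textbf{u}_h^{n}$, the Stokes coupling carries $\phi_h^{n}$), and carrying this out while keeping every constant independent of the mesh is what forces the explicit, CFL-type threshold $\widetilde{C}\Delta t<1$ in place of an unconditional bound.
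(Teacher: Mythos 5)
Your proposal is correct and follows essentially the same route as the paper's proof: testing with the current-level solutions (the paper scales by $2\Delta t$, a cosmetic difference), the polarization identity for the time-difference terms, Korn's inequality and the lower bound on $\mathbb{K}^{\epsilon}$ for coercivity, trace plus weighted Young inequalities to absorb the time-lagged interface traces into the dissipation at shifted time levels, and summation followed by a discrete Gronwall argument under $\widetilde{C}\Delta t<1$. The constant $\widetilde{C}$ you predict, including the factors $C_t^2\widetilde{C}_t^2$, the geometric-mean denominators, and the role of $g^2$, matches the paper's estimate (\ref{S5})--(\ref{S6}) exactly.
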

\begin{proof}
Set $\textbf{v}_{h}=2\Delta t\textbf{u}_{h}^{n+1}$ in (\ref{Sa1}). Then using the equality $2(a-b,a)=\vert a\vert ^{2} - \vert b\vert ^{2} +\vert a-b\vert ^{2}$ and  $b(\textbf{u}_{h}^{n+1},p_{h}^{n+1})=0$, we obtain
%%%%%%%%%%%%%%%%%%%%%%%%%%%%%%%%%%%%%S1
\begin{align}\label{S1}
\|\textbf{u}_{h}^{n+1}\|_{0}^{2} 
-\|\textbf{u}_{h}^{n}\|_{0}^{2}
 &+\|\textbf{u}_{h}^{n+1}-\textbf{u}_{h}^{n}\|_{0}^{2} 
 +4\nu \Delta t  \|\mathbb{D}(\textbf{u}_{h}^{n+1}) \|_{0}^{2}
 +\Delta t \int_{\Gamma} \frac{2 \nu \alpha \sqrt{d}}{\sqrt{trace(\prod)}}(\textbf{u}_{h}^{n+1} \cdot \tau_{i})(\textbf{u}_{h}^{n+1} \cdot \tau_{i}),      \notag  \\       
&=2\Delta t (\textbf{f}_{f}^{n+1},\textbf{u}_{h}^{n+1})_{\Omega_{f}}
-2\Delta tg \int_{\Gamma}\phi_{h}^{n}\textbf{u}_{h}^{n+1} \cdot \textbf{n}_{f}. 
\end{align}
Setting $\psi_{h}=2\Delta t \phi_{h}^{n+1}$and using $2(a-b,a)=\vert a\vert ^{2} - \vert b\vert ^{2} +\vert a-b\vert ^{2}$, we have
%%%%%%%%%%%%%%%%%%%%%%%%%%%%%%%%%%%%%%%%S2
\begin{align}\label{S2}
\|\phi_{h}^{n+1}\|_{0}^{2} 
-\|\phi_{h}^{n}\|_{0}^{2}
 +&\|\phi_{h}^{n+1}-\phi_{h}^{n}\|_{0}^{2} 
 +\frac{2\Delta t}{S_{0}}(\mathbb{K}^{\epsilon}(\textbf{x})\nabla \phi_{h}^{n+1},\nabla \phi_{h}^{n+1})_{\Omega_{p}} \notag \\
 =&\frac{2\Delta t}{S_{0}}(f_{p}^{n+1},\phi_{h}^{n+1})_{\Omega_{p}}
 + \frac{2\Delta t}{S_{0}}\int_{\Gamma}\textbf{u}_{h}^{n} \cdot \textbf{n}_{f}\phi_{h}^{n+1}.
\end{align}
Combining (\ref{S1}) with (\ref{S2}) and using Korn's inequality as well as the uniform boundedness of permeability tensor, we get
%%%%%%%%%%%%%%%%%%%%%%%%%%%%%%%%%%%%%%%%%%%%%%%%%%S3
\begin{align}\label{S3}
&\|\textbf{u}_{h}^{n+1}\|_{0}^{2} 
-\|\textbf{u}_{h}^{n}\|_{0}^{2}
 +\|\textbf{u}_{h}^{n+1}-\textbf{u}_{h}^{n}\|_{0}^{2} 
 +\|\phi_{h}^{n+1}\|_{0}^{2} 
-\|\phi_{h}^{n}\|_{0}^{2}
 +\|\phi_{h}^{n+1}-\phi_{h}^{n}\|_{0}^{2}       \nonumber \\
 &+4C_{1}\nu \Delta t  \ \|\textbf{u}_{h}^{n+1} \|_{1}^{2}  
 +\frac{2\lambda_{min}}{S_{0}}\Delta t\ \|\phi_{h}^{n+1}\|_{1}^{2}  \nonumber  \\
 \le &2\Delta t (\textbf{f}_{f}^{n+1},\textbf{u}_{h}^{n+1})_{\Omega_{f}}
 +\frac{2\Delta t}{S_{0}}(f_{p}^{n+1},\phi_{h}^{n+1})_{\Omega_{p}}  \nonumber  \\
 &+\frac{2\Delta t}{S_{0}}\int_{\Gamma}\textbf{u}_{h}^{n} \cdot \textbf{n}_{f}\phi_{h}^{n+1}
  -2\Delta tg \int_{\Gamma}\phi_{h}^{n}\textbf{u}_{h}^{n+1} \cdot \textbf{n}_{f}.
\end{align}
For the first two terms on the right-hand side of (\ref{S3}), by using $\text {Hölder}$, $\text {Poincaré}$ and Young inequalities, it follows that
%%%%%%%%%%%%%%%%%%%%%%%%%%%%%%%%%%%%%%%%%%%%%%%%%%S4
\begin{equation}\label{S4}
{
\begin{aligned}
2\Delta t (\textbf{f}_{f}^{n+1},\textbf{u}_{h}^{n+1})_{\Omega_{f}}
 +\frac{2\Delta t}{S_{0}}(f_{p}^{n+1},\phi_{h}^{n+1})_{\Omega_{p}}
 &\le 3C_{1}\nu \Delta t \| \textbf{u}_{h}^{n+1}\|_{1}^{2}
 +\frac{C_{p}^{2}}{3C_{1}\nu}\Delta t \|\textbf{f}_{f}^{n+1}\|_{0}^{2}  \\
 &+\frac{\lambda_{min}}{S_{0}}\Delta t \| \phi_{h}^{n+1}\|_{1}^{2}
 +\frac{\widetilde{C}_{p}^{2}}{S_{0}\lambda_{min}}\Delta t \|f_{p}^{n+1}\|_{0}^{2}.   
\end{aligned}
}
\end{equation}
The third and fourth terms on the right-hand side of (\ref{S3}) are bounded by $\text {Hölder}$, trace and Young inequalities, we can obtain
%%%%%%%%%%%%%%%%%%%%%%%%%%%%%%%%%%%%%%%%%%%%%%%%%%S5
\begin{align}\label{S5}
&\frac{2\Delta t}{S_{0}}\int_{\Gamma}\textbf{u}_{h}^{n} \cdot \textbf{n}_{f}\phi_{h}^{n+1}
  -2\Delta tg \int_{\Gamma}\phi_{h}^{n}\textbf{u}_{h}^{n+1} \cdot \textbf{n}_{f} \nonumber \\
&\le \frac{2\Delta t}{S_0}\|\textbf{u}_{h}^{n}\|_{L^{2}(\Gamma)}\|\phi_{h}^{n+1}\|_{L^{2}(\Gamma)}
+2g\Delta t \|\phi_{h}^{n}\|_{L^{2}(\Gamma)} \|\textbf{u}_{h}^{n+1}\|_{L^{2}(\Gamma)} \nonumber \\
&\le \frac{2C_{t}\widetilde{C}_{t}\Delta t}{S_0}\|\textbf{u}_{h}^{n}\|_{0}^{\frac{1}{2}} 
\|   \textbf{u}_{h}^{n}\|_{1}^{\frac{1}{2}}
\|\phi_{h}^{n+1}\|_{0}^{\frac{1}{2}} 
\| \phi_{h}^{n+1}\|_{1}^{\frac{1}{2}}
+2gC_{t}\widetilde{C}_{t}\Delta t \|\phi_{h}^{n}\|_{0}^{\frac{1}{2}} \| \phi_{h}^{n}\|_{1}^{\frac{1}{2}} \|\textbf{u}_{h}^{n+1}\|_{0}^{\frac{1}{2}} \|   \textbf{u}_{h}^{n+1}\|_{1}^{\frac{1}{2}} \nonumber  \\
&\le \sqrt{\frac{C_{1}\nu \lambda_{min}}{S_{0}}}\Delta t 
\|   \textbf{u}_{h}^{n}\|_{1} 
\| \phi_{h}^{n+1}\|_{1}
+\frac{C_{t}^{2}\widetilde{C}_{t}^{2}}{\sqrt{S_{0}^{3}C_{1}\nu \lambda_{min}}}\Delta t
\|\textbf{u}_{h}^{n}\|_{0} 
\|\phi_{h}^{n+1}\|_{0}                    \nonumber  \\
&\qquad +\sqrt{\frac{C_{1}\nu \lambda_{min}}{2S_{0}}}\Delta t 
\|   \textbf{u}_{h}^{n+1}\|_{1} 
\| \phi_{h}^{n}\|_{1}
+\sqrt{\frac{2S_{0}}{C_{1}\nu \lambda_{min}}}g^{2}C_{t}^{2}\widetilde{C}_{t}^{2} \Delta t
\|\textbf{u}_{h}^{n+1}\|_{0} 
\|\phi_{h}^{n}\|_{0}       \nonumber \\
&\le \frac{C_{1}\nu}{2}\Delta t \|   \textbf{u}_{h}^{n}\|_{1}^{2}
+\frac{\lambda_{min}}{2S_{0}}\Delta t   \| \phi_{h}^{n+1}\|_{1}^{2}
+\frac{C_{1}\nu}{2}\Delta t \|   \textbf{u}_{h}^{n+1}\|_{1}^{2}
+\frac{\lambda_{min}}{4S_{0}}\Delta t   \| \phi_{h}^{n}\|_{1}^{2}  \nonumber   \\
&\qquad +\frac{C_{t}^{2}\widetilde{C}_{t}^{2} }{2\sqrt{S_{0}^{3}C_{1}\nu \lambda_{min}}}\Delta t (\|\textbf{u}_{h}^{n}\|_{0}^{2} + \|\phi_{h}^{n+1}\|_{0}^{2})
+\frac{g^{2}C_{t}^{2}\widetilde{C}_{t}^{2} \sqrt{S_0}}{\sqrt{2C_{1}\nu \lambda_{min}}} \Delta t (\|\textbf{u}_{h}^{n+1}\|_{0}^{2} + \|\phi_{h}^{n}\|_{0}^{2}).
\end{align}
Combining (\ref{S4}) and (\ref{S5}) with (\ref{S3}) leads to
%%%%%%%%%%%%%%%%%%%%%%%%%%%%%%%%%%%%%%%%%%%%%%%%%%S6
\begin{align}\label{S6}
&\|\textbf{u}_{h}^{n+1}\|_{0}^{2} 
-\|\textbf{u}_{h}^{n}\|_{0}^{2}
 +\|\textbf{u}_{h}^{n+1}-\textbf{u}_{h}^{n}\|_{0}^{2} 
 +\|\phi_{h}^{n+1}\|_{0}^{2} 
-\|\phi_{h}^{n}\|_{0}^{2}
+\|\phi_{h}^{n+1}-\phi_{h}^{n}\|_{0}^{2}  \nonumber \\
&\quad +\frac{C_{1}\nu }{2}\Delta t  (\| \textbf{u}_{h}^{n+1}\|_{1}^{2} -\| \textbf{u}_{h}^{n} \|_{1}^{2} )
+\frac{\lambda_{min}}{2S_{0}}\Delta t\| \phi_{h}^{n+1}\|_{1}^{2} -\frac{\lambda_{min}}{4S_{0}}\Delta t \| \phi_{h}^{n}\|_{1}^{2}  \nonumber \\
&\le \frac{C_{p}^{2}}{3C_{1}\nu}\Delta t \|\textbf{f}_{f}^{n+1}\|_{0}^{2}   
+\frac{\widetilde{C}_{p}^{2}}{S_{0}\lambda_{min}}\Delta t \|f_{p}^{n+1}\|_{0}^{2}   \nonumber  \\
&\quad +\frac{C_{t}^{2}\widetilde{C}_{t}^{2}}{2\sqrt{S_{0}^{3}C_{1}\nu \lambda_{min}}}\Delta t (\|\textbf{u}_{h}^{n}\|_{0}^{2} + \|\phi_{h}^{n+1}\|_{0}^{2})
+\frac{g^{2}C_{t}^{2}\widetilde{C}_{t}^{2}\sqrt{S_{0}}}{\sqrt{2C_{1}\nu \lambda_{min}}} \Delta t (\|\textbf{u}_{h}^{n+1}\|_{0}^{2} + \|\phi_{h}^{n}\|_{0}^{2}).  
\end{align}
Let $\widetilde{C}=\frac{C_{t}^{2}\widetilde{C}_{t}^{2}}{2\sqrt{S_{0}^{3}C_{1}\nu \lambda_{min}}}+\frac{g^{2}C_{t}^{2}\widetilde{C}_{t}^{2}\sqrt{S_{0}}}{\sqrt{2C_{1}\nu \lambda_{min}}}$. Here $\frac{\lambda_{min}}{2S_{0}}>\frac{\lambda_{min}}{4S_{0}}$ is evident.
%%We denote $\widetilde{C}=\frac{C_{t}^{2}\widetilde{C}_{t}^{2}}{2\sqrt{S_{0}^{3}C_{1}\nu \lambda_{min}}}+\frac{g^{2}C_{t}^{2}\widetilde{C}_{t}^{2}\sqrt{S_{0}}}{\sqrt{2C_{1}\nu \lambda_{min}}}$. Obviously, $\frac{\lambda_{min}}{2S_{0}}>\frac{\lambda_{min}}{4S_{0}}$. 
Then by summing up equation (\ref{S6}) from $n=0$ to $N-1$, we have
%%%%%%%%%%%%%%%%%%%%%%%%%%%%%%%%%%%%%%%%%%%%%%%%%%S7
\begin{align}\label{S7}
&\|\textbf{u}_{h}^{N}\|_{0}^{2} + \|\phi_{h}^{N}\|_{0}^{2}
 +\sum\limits_{n=0}^{N-1}(\|\textbf{u}_{h}^{n+1}-\textbf{u}_{h}^{n}\|_{0}^{2} +\|\phi_{h}^{n+1}-\phi_{h}^{n}\|_{0}^{2}) 
 + \frac{C_{1}\nu\Delta t}{2}\|\textbf{u}_{h}^{N}\| _{1}^{2}
 + \frac{\lambda_{min}\Delta t}{2S_{0}}\| \phi_{h}^{N}\| _{1}^{2}   \nonumber \\
 &\qquad \quad  \le \widetilde{C} \Delta t \sum\limits_{n=0}^{N-1}(\|\textbf{u}_{h}^{n+1}\|_{0}^{2} + \|\phi_{h}^{n+1}\|_{0}^{2})
 +\frac{C_{p}^{2}}{3C_{1}\nu}\Delta t \sum\limits_{n=0}^{N-1}\|\textbf{f}_{f}^{n+1}\|_{0}^{2}   
+\frac{\widetilde{C}_{p}^{2}}{S_{0}\lambda_{min}}\Delta t \sum\limits_{n=0}^{N-1} \|f_{p}^{n+1}\|_{0}^{2} \nonumber \\
&\qquad \qquad +\|\textbf{u}^{0}\|_{0}^{2} + \|\phi^{0}\|_{0}^{2}
+ \frac{C_{1}\nu\Delta t}{2}\|\textbf{u}^{0}\| _{1}^{2}
 + \frac{\lambda_{min}\Delta t}{2S_{0}}\| \phi^{0}\|_{1}^{2}. 
\end{align}
When $\widetilde{C} \Delta t \textless 1$, employing Gronwall inequality allows us to obtain the following result
%%%%%%%%%%%%%%%%%%%%%%%%%%%%%%%%%%%%%%%%%%%%%%%%%%S8
\begin{align}\label{S8}
&\|\textbf{u}_{h}^{N}\|_{0}^{2} + \|\phi_{h}^{N}\|_{0}^{2}
 +\sum\limits_{n=0}^{N-1}(\|\textbf{u}_{h}^{n+1}-\textbf{u}_{h}^{n}\|_{0}^{2} +\|\phi_{h}^{n+1}-\phi_{h}^{n}\|_{0}^{2}) 
 + \frac{C_{1}\nu\Delta t}{2}\|\textbf{u}_{h}^{N}\| _{1}^{2}
 + \frac{\lambda_{min}\Delta t}{2S_{0}}\| \phi_{h}^{N}\| _{1}^{2} \nonumber  \\
 &\qquad \quad  \le C(T) (
\frac{C_{p}^{2}}{3C_{1}\nu}\Delta t \sum\limits_{n=0}^{N-1}\|\textbf{f}_{f}^{n+1}\|_{0}^{2}   
+\frac{\widetilde{C}_{p}^{2}}{S_{0}\lambda_{min}}\Delta t \sum\limits_{n=0}^{N-1} \|f_{p}^{n+1}\|_{0}^{2} \nonumber\\
&\qquad \qquad +\|\textbf{u}^{0}\|_{0}^{2} + \|\phi^{0}\|_{0}^{2}
+ \frac{C_{1}\nu\Delta t}{2}\|\textbf{u}^{0}\| _{1}^{2}
 + \frac{\lambda_{min}\Delta t}{2S_{0}}\| \phi^{0}\|_{1}^{2}). 
\end{align}
\end{proof}
%%%%%%%%%%%%%%%%%%%%%%%%%%%%%%%%%%%%%%%    3.3节
\subsection{Error estimates}
In this part, we will analyze the error of Algorithm \ref{alg:1}. Here we assume the true solution 
$\underline{\textbf{u}}(t)=(\textbf{u}(t),\phi(t))$ satisfies the following assumptions:
\begin{align*}%%%%%%真解的假设
\underline{\textbf{u}}_{t}(t) \in L^{2}(0,T;\textbf{U}),\ \underline{\textbf{u}}_{tt}(t) \in L^{2}(0,T;\textbf{L}^{2}).
\end{align*}
Next we define a projection operator $P_{h}:(\underline{\textbf{u}}(t),p(t))\in \textbf{U}\times Q_{f}\rightarrow (P_{h}\underline{\textbf{u}}(t),P_{h}p(t))\in \textbf{U}_{h}\times Q_{fh},$
$\forall t\in [0,T]$  by satisfying
\begin{align*} %%投影的定义
a(P_{h}\underline{\textbf{u}}(t),\underline{\textbf{v}}_{h})
+b(\underline{\textbf{v}}_{h},P_{h}p(t))
&=a(\underline{\textbf{u}}(t),\underline{\textbf{v}}_{h})
+b(\underline{\textbf{v}}_{h},p(t)) &\forall \underline{\textbf{v}}_{h}\in \textbf{U}_{h},\\
b(P_{h}\underline{\textbf{u}}(t),q_{h})&=0 &\forall q_{h}\in Q_{fh}.
\end{align*}
In addition, for any $t \in [0,T]$, we assume that the solution satisfies $\textbf{\underline{u}}(t) \in H^{2}(\Omega_{f})^d \times H^{2}(\Omega_{p})$ and $p(t)\in H^{1}(\Omega_{f})$.
 %%%%投影的性质 没有多尺度现象 
If $\mathbb{K}^{\epsilon}(\textbf{x})$ doesn't exhibit multiscale characteristics, the following approximate properties hold:
\begin{align}  
\|P_{h}\underline{\textbf{u}}(t)-\underline{\textbf{u}}(t)\|_{0}&\le Ch^{2},\\
 \|P_{h}\underline{\textbf{u}}(t)-\underline{\textbf{u}}(t)\|_{1}&\le Ch.
\end{align}
%%%%%%%%%%%%   %投影的性质 有多尺度现象 
However, when $\mathbb{K}^{\epsilon}(\textbf{x})$ has multiscale characteristics, if we only consider the elliptic equation in a single domain without involving coupled problems. In \cite{YDC}, the multiscale finite element method is applied to elliptic equation, demonstrating the $L^2$ and $H^1$ error estimation.
%Following the proof of the error estimate in the $L^2$ norm, we can easily obtain an estimate for the $H^1$ error.
Therefore, it is natural for us to conjecture that similar properties hold true for coupled problems:
\begin{align}   %%%%投影的性质
\|P_{h}\underline{\textbf{u}}(t)-\underline{\textbf{u}}(t)\|_{0}&\le C(\epsilon+h^{2}+\frac{\epsilon}{h}),\label{P1}\\
 \|P_{h}\underline{\textbf{u}}(t)-\underline{\textbf{u}}(t)\|_{1}&\le C(\sqrt{\epsilon}+h+\sqrt{\frac{\epsilon}{h}})\label{P2}.   
\end{align}
\begin{remark}
By introducing an interpolation operator in the Darcy region and a projection operator in the Stokes region, combined with relevant conclusions from existing literature on projection and interpolation, we demonstrated the $L^2$ and $H^1$ error estimates for the steady-state Stokes-Darcy model with multiscale characteristics in the Darcy region \cite{HZZZ}. Therefore, the properties mentioned above hold.
\end{remark}
For simplicity, we use $(\widetilde{\underline{\textbf{u}}}(t),\widetilde{\phi}(t))$ replace $(P_{h}\underline{\textbf{u}}(t),P_{h}p(t))$. And we denote 
$$e_{1}^{n+1}=\textbf{u}(t^{n+1})
-\widetilde{\textbf{u}}^{n+1}
+\widetilde{\textbf{u}}^{n+1}
-\textbf{u}_{h}^{n+1}:= \sigma_{1}^{n+1} + \xi_{1}^{n+1},$$
$$e_{2}^{n+1}=\phi(t^{n+1})
-\widetilde{\phi}^{n+1}
+\widetilde{\phi}^{n+1}
-\phi_{h}^{n+1}:= \sigma_{2}^{n+1} + \xi_{2}^{n+1},$$
$$\delta^{n+1}=p(t^{n+1})-\widetilde{p}^{n+1}
+\widetilde{p}^{n+1}-p_{h}^{n+1}:=\sigma_{\delta}^{n+1}+\xi_{\delta}^{n+1}.$$
Here we assume $\xi_{1}^{0}=0$ and $\xi_{2}^{0}=0$.
From approximation properties, we have 
$$\| \sigma_{1}^{n+1} \|_{0} + \| \sigma_{2}^{n+1}\|_{0}\le C(\epsilon+h^{2}+\frac{\epsilon}{h}), \| \sigma_{1}^{n+1} \|_{1} + \| \sigma_{2}^{n+1}\|_{1}\le C(\sqrt{\epsilon}+h+\sqrt{\frac{\epsilon}{h}}). $$
In the following, we estimate the $\xi_{1}^{n+1}$ and $\xi_{2}^{n+1}$. 
Note that $(\widetilde{\underline{\textbf{u}}}^{n+1},\widetilde{\phi}^{n+1}) \in \textbf{U}_{h} \times Q_{fh}$  fulfills the subsequent equations: for all $\underline{\textbf{v}}_{h}=(\textbf{v}_{h},\psi_{h})\in \textbf{U}_{h},q\in Q_{fh},$
\begin{align}%%%%%%%%%投影带入真解方程
(\frac{\widetilde{\textbf{u}}^{n+1}-\widetilde{\textbf{u}}^{n}}{\Delta t},\textbf{v}_{h})
&+ 2\nu(\mathbb{D}(\widetilde{\textbf{u}}^{n+1}),\mathbb{D}(\textbf{v}_{h}))_{\Omega_{f}}
+ b(\textbf{v}_{h},\widetilde{p}^{n+1})     
+ \int_{\Gamma} \frac{\nu \alpha \sqrt{d}}{\sqrt{trace(\prod)}} (\widetilde{\textbf{u}}^{n+1} \cdot \tau_{i})(\textbf{v}_{h} \cdot \tau_{i})      \nonumber     \\ 
&=(w_{u}^{n+1},\textbf{v}_{h})
+(\textbf{f}_{f}^{n+1},\textbf{v}_{h})_{\Omega_{f}}
-g \int_{\Gamma} \widetilde{\phi}^{n+1}\textbf{v}_{h} \cdot \textbf{n}_{f},\label{PE1}     
\end{align}
\begin{align}
(\frac{\widetilde{\phi}^{n+1} - \widetilde{\phi}^{n}}{\Delta t},\psi_{h}) 
+ \frac{1}{S_{0}}(\mathbb{K}^{\epsilon}(\textbf{x})\nabla \widetilde{\phi}^{n+1},\nabla 
\psi_{h})_{\Omega_{p}}=&
(w_{\phi}^{n+1},\psi_{h})
+ \frac{1}{S_{0}}(f_{p}^{n+1},\psi_{h})_{\Omega_{p}}  \nonumber \\
&+\frac{1}{S_{0}}\int_{\Gamma} \widetilde{\textbf{u}}^{n+1} \cdot \textbf{n}_{f}\psi_{h}, \label{PE2}
\end{align}
 where 
\begin{align*}%%%w
   w_{u}^{n+1} & = \frac{\widetilde{\textbf{u}}^{n+1}-\widetilde{\textbf{u}}^{n}}{\Delta t}-\textbf{u}_{t}(t^{n+1}) \\
   & = [\frac{\widetilde{\textbf{u}}^{n+1}-\widetilde{\textbf{u}}^{n}}{\Delta t}-\frac{\textbf{u}(t^{n+1})-\textbf{u}(t^{n})}{\Delta t}]+[\frac{\textbf{u}(t^{n+1})-\textbf{u}(t^{n})}{\Delta t}-\textbf{u}_{t}(t^{n+1})] \\
   & =w_{u,1}^{n+1} + w_{u,2}^{n+1},
\end{align*}
\begin{align*}
w_{\phi}^{n+1} & = \frac{\widetilde{\phi}^{n+1}-\widetilde{\phi}^{n}}{\Delta t}-\phi_{t}(t^{n+1}) \\
 & = [\frac{\widetilde{\phi}^{n+1}-\widetilde{\phi}^{n}}{\Delta t}-\frac{\phi(t^{n+1})-\phi(t^{n})}{\Delta t}]+[\frac{\phi(t^{n+1})-\phi(t^{n})}{\Delta t}-\phi_{t}(t^{n+1})] \\
 & = w_{\phi,1}^{n+1} + w_{\phi,2}^{n+1}.
\end{align*}
%%%%%%%用到的不等式提前给出
It can be readily confirmed that the following properties of $w_{u,1}^{n+1},w_{u,2}^{n+1},w_{\phi,1}^{n+1}$ and $w_{\phi,2}^{n+1}$.
\begin{align*} 
w_{u,1}^{n+1}=(P_{h}-I)\frac{\textbf{u}(t^{n+1})-\textbf{u}(t^{n})}{\Delta t} = \frac{1}{\Delta t} \int_{t^{n}}^{t^{n+1}}(P_{h}-I)\textbf{u}_{t}(t) dt ,
\end{align*}
so 
\begin{align}\label{wu1}  %%%%%%%%%%%%%%%%%%%%%%%wu1
\|w_{u,1}^{n+1}\|_{0}^{2}&= \frac{1}{\Delta t^{2}} \int_{\Omega}(\int_{t^{n}}^{t^{n+1}} (P_{h}-I)\textbf{u}_{t}(t) dt)^{2} d\textbf{x} \notag \\
&\le \frac{1}{\Delta t^{2}}\int_{\Omega} (\int_{t^{n}}^{t_{n+1}}  1^{2}  dt)(\int_{t^{n}}^{t_{n+1}} ((P_{h}-I)\textbf{u}_{t}(t))^{2} dt) d\textbf{x}  \notag \\
&\le \frac{1}{\Delta t} \int_{t^{n}}^{t^{n+1}} \|(P_{h}-I)\textbf{u}_{t}\|_{0}^{2} dt.
\end{align}
In a similar way,
$$ w_{u,2}^{n+1}=\frac{\textbf{u}(t^{n+1})-\textbf{u}(t^{n})}{\Delta t}-\textbf{u}_{t}(t^{n+1})=\frac{1}{\Delta t} \int_{t^{n}}^{t^{n+1}} (t^{n+1}-t)\textbf{u}_{tt}(t) dt ,$$
which means
\begin{align}\label{wu2}  %%%%%%%%%%%%%%%%%%%%%%%% wu2
\| w_{u,2}^{n+1}\|_{0}^{2}=\frac{1}{\Delta t^{2}} \int_{\Omega}(\int_{t^{n}}^{t^{n+1}} (t^{n+1}-t)\textbf{u}_{tt}(t)dt)^{2} d\textbf{x} 
\le  \Delta t \int_{t^{n}}^{t^{n+1}} \|\textbf{u}_{tt}\|_{0}^{2} dt.
\end{align}
Similarly, we will obtain the following results
% for $w_{\phi,1}^{n+1}$ and $w_{\phi,2}^{n+1}$,
\begin{align}
 \|w_{\phi,1}^{n+1}\|_{0}^{2}
&\le  \frac{1}{\Delta t} \int_{t^{n}}^{t^{n+1}} \|(P_{h}-I)\phi_{t}\|_{0}^{2} dt,\label{wphi1}  \\
 \| w_{\phi,2}^{n+1}\|_{0}^{2}
&\le  \Delta t \int_{t^{n}}^{t^{n+1}} \|\phi_{tt}\|_{0}^{2} dt.\label{wphi2}
\end{align}
Since $P_{h}$ is a bounded operator, we have 
\begin{align}\label{Pb1}
\|\widetilde{\textbf{u}}(t^{n+1})-\widetilde{\textbf{u}}(t^{n}) \|_{1}^{2}
&\le C\| \textbf{u}(t^{n+1})-\textbf{u}(t^{n}) \|_{1}^{2}=C\int_{\Omega} (\int_{t^{n}}^{t^{n+1}} \nabla \textbf{u}_{t}(t)dt)^{2} d\textbf{x}  \notag\\
&\le C\int_{\Omega} (\int_{t^{n}}^{t^{n+1}} \vert\nabla \textbf{u}_{t}(t)\vert^{2} dt\int_{t^{n}}^{t^{n+1}} 1dt) d\textbf{x} 
=C\Delta t \int_{t^{n}}^{t^{n+1}} \| \textbf{u}_{t}\|_{1}^{2} dt, 
\end{align}
\begin{align}\label{Pb2}     %%%% 左对齐 %%%%%投影有界
\| \widetilde{\phi}(t^{n+1})-\widetilde{\phi}(t^{n})\|_{1}^{2}
\le C\Delta t \int_{t^{n}}^{t^{n+1}} \| \phi_{t}\|_{1}^{2} dt.
\end{align}
Now we provide the following theorem to demonstrate the error estimate of Algorithm \ref{alg:1}.
\begin{theorem}[Error]
 Under the assumption of Theorem \ref{Stability} for the time step size, the following error estimate holds:
\begin{align} \label{Error}   
\| \textbf{u}(t^{N})-\textbf{u}_{h}^{N} \|_{0}^{2} 
+\| \phi(t^{N})-\phi_{h}^{N} \|_{0}^{2} \le C(T)(\Delta t^{2} +\epsilon^{2} + h^{4} + (\frac{\epsilon}{h})^{2}). 
\end{align}
\end{theorem}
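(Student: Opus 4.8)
The plan is to follow the splitting $e_i^{n+1}=\sigma_i^{n+1}+\xi_i^{n+1}$ already introduced, controlling the projection part $\sigma_i$ by the approximation properties (\ref{P1})--(\ref{P2}) and reducing the theorem to an energy estimate for the discrete error $\xi_i$. By the triangle inequality $\|e_1^{N}\|_0^2+\|e_2^{N}\|_0^2\le 2(\|\sigma_1^{N}\|_0^2+\|\sigma_2^{N}\|_0^2)+2(\|\xi_1^{N}\|_0^2+\|\xi_2^{N}\|_0^2)$, and since the bound $\|\sigma_1^{N}\|_0+\|\sigma_2^{N}\|_0\le C(\epsilon+h^2+\tfrac{\epsilon}{h})$ already contributes $O(\epsilon^2+h^4+(\tfrac{\epsilon}{h})^2)$, everything hinges on proving $\|\xi_1^{N}\|_0^2+\|\xi_2^{N}\|_0^2\le C(T)(\Delta t^2+\epsilon^2+h^4+(\tfrac{\epsilon}{h})^2)$.

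First I would derive the error equations for $\xi_1^{n+1},\xi_2^{n+1}$ by subtracting the scheme (\ref{Sa1})--(\ref{Da}) from the projection identities (\ref{PE1})--(\ref{PE2}), rewriting each lagged interface factor as $\widetilde{\phi}^{n+1}-\phi_h^{n}=(\widetilde{\phi}^{n+1}-\widetilde{\phi}^{n})+\xi_2^{n}$ and $\widetilde{\textbf{u}}^{n+1}-\textbf{u}_h^{n}=(\widetilde{\textbf{u}}^{n+1}-\widetilde{\textbf{u}}^{n})+\xi_1^{n}$. This yields, for all $(\textbf{v}_h,\psi_h)\in\textbf{U}_h$,
\begin{align*}
(\tfrac{\xi_1^{n+1}-\xi_1^{n}}{\Delta t},\textbf{v}_h)+a_f(\xi_1^{n+1},\textbf{v}_h)+b(\textbf{v}_h,\xi_\delta^{n+1})&=(w_u^{n+1},\textbf{v}_h)-g\int_\Gamma[(\widetilde{\phi}^{n+1}-\widetilde{\phi}^{n})+\xi_2^{n}]\,\textbf{v}_h\cdot\textbf{n}_f,\\
(\tfrac{\xi_2^{n+1}-\xi_2^{n}}{\Delta t},\psi_h)+a_p(\xi_2^{n+1},\psi_h)&=(w_\phi^{n+1},\psi_h)+\tfrac{1}{S_0}\int_\Gamma[(\widetilde{\textbf{u}}^{n+1}-\widetilde{\textbf{u}}^{n})+\xi_1^{n}]\cdot\textbf{n}_f\,\psi_h.
\end{align*}
Because both $\textbf{u}_h^{n+1}$ and $\widetilde{\textbf{u}}^{n+1}$ are discretely divergence free, $b(\xi_1^{n+1},q_h)=0$ for all $q_h\in Q_{fh}$, so the choice $q_h=\xi_\delta^{n+1}$ will eliminate the pressure error from the energy identity.

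Next I would mimic the stability argument of Theorem \ref{Stability} verbatim: take $\textbf{v}_h=2\Delta t\,\xi_1^{n+1}$, $\psi_h=2\Delta t\,\xi_2^{n+1}$, apply $2(a-b,a)=|a|^2-|b|^2+|a-b|^2$, then use Korn's inequality and the coercivity constant $\lambda_{min}$ to obtain on the left a telescoping $L^2$-energy plus the dissipative $H^1$ terms $4C_1\nu\Delta t\,\|\xi_1^{n+1}\|_1^2$ and $\tfrac{2\lambda_{min}}{S_0}\Delta t\,\|\xi_2^{n+1}\|_1^2$, exactly as in (\ref{S3}). The interface contributions carrying $\xi_1^{n},\xi_2^{n}$ are handled with the trace and Young inequalities precisely as in (\ref{S5}), regenerating the same constant $\widetilde{C}$ and hence the same restriction $\widetilde{C}\Delta t<1$. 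The genuinely new terms are the truncation terms $(w_u^{n+1},\xi_1^{n+1})$, $(w_\phi^{n+1},\xi_2^{n+1})$ and the consistency-in-time interface terms built from $\widetilde{\phi}^{n+1}-\widetilde{\phi}^{n}$ and $\widetilde{\textbf{u}}^{n+1}-\widetilde{\textbf{u}}^{n}$. For the former I would use Cauchy--Schwarz, Poincaré and Young together with (\ref{wu1})--(\ref{wphi2}): the $w_{\cdot,2}$ pieces contribute $C\Delta t^2\int_0^T(\|\textbf{u}_{tt}\|_0^2+\|\phi_{tt}\|_0^2)$, i.e.\ $O(\Delta t^2)$, while the $w_{\cdot,1}$ pieces, after invoking the approximation property for $\underline{\textbf{u}}_t$, give $O(\epsilon^2+h^4+(\tfrac{\epsilon}{h})^2)$. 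For the consistency interface terms, since each pairs against the same-level error $\xi_i^{n+1}$, I would bound the moving factor by the trace inequality $\|\cdot\|_{L^2(\Gamma)}\le C\|\cdot\|_1$ and then use Young with a small parameter to absorb the resulting $\|\xi_i^{n+1}\|_1^2$ into the level-$(n+1)$ dissipation (re-balancing the Young constants as in the passage from (\ref{S5}) to (\ref{S6})); the remainder is controlled by (\ref{Pb1})--(\ref{Pb2}), yielding $C\Delta t^2\int_0^T(\|\textbf{u}_t\|_1^2+\|\phi_t\|_1^2)=O(\Delta t^2)$.

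The main obstacle is exactly the explicit, time-lagged interface coupling: because the previous-level errors $\xi_1^{n},\xi_2^{n}$ re-enter the right-hand side through the trace, one cannot obtain a one-step bound and must instead keep the level-$(n+1)$ coercive $H^1$ terms strictly dominant over the level-$n$ trace contributions, which is precisely what forces the CFL-type condition $\widetilde{C}\Delta t<1$. After summing the per-step estimate from $n=0$ to $N-1$, using $\xi_1^0=\xi_2^0=0$, the right-hand side collects into $\widetilde{C}\Delta t\sum_{n}(\|\xi_1^{n+1}\|_0^2+\|\xi_2^{n+1}\|_0^2)$ plus data of size $O(\Delta t^2+\epsilon^2+h^4+(\tfrac{\epsilon}{h})^2)$, just as in (\ref{S7}); the discrete Gronwall inequality, valid under $\widetilde{C}\Delta t<1$, then closes the bound for $\|\xi_i^{N}\|_0^2$. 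Combining this with the $\sigma_i$ estimates and using $(\epsilon+h^2+\tfrac{\epsilon}{h})^2\le C(\epsilon^2+h^4+(\tfrac{\epsilon}{h})^2)$ delivers (\ref{Error}).
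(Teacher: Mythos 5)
Your proposal is correct and follows essentially the same route as the paper's own proof: the same $\sigma+\xi$ splitting, the same error equations obtained by subtracting the scheme from the projection identities, the same energy choices $\textbf{v}_h=2\Delta t\,\xi_1^{n+1}$, $\psi_h=2\Delta t\,\xi_2^{n+1}$, the same three-way treatment of the right-hand side (truncation terms via (\ref{wu1})--(\ref{wphi2}), consistency interface terms via (\ref{Pb1})--(\ref{Pb2}), lagged-error interface terms via trace/Young regenerating $\widetilde{C}$), and the same summation-plus-Gronwall closure under $\widetilde{C}\Delta t<1$ followed by the triangle inequality with (\ref{P1})--(\ref{P2}).
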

\begin{proof}
Subtracting (\ref{Sa1})-(\ref{Da}) from (\ref{PE1})-(\ref{PE2}), we have 
%%%%%%%%%%%%%%%%%%%%%%%%%%%%%%%%%%%%%%%%%%%%%%%%%%%%E1
\begin{align}\label{E1}
(\frac{\xi_{1}^{n+1}-\xi_{1}^{n}}{\Delta t},\textbf{v}_{h})
+ 2\nu(\mathbb{D}(\xi_{1}^{n+1}),\mathbb{D}(\textbf{v}_{h}))_{\Omega_{f}}
&+b(\textbf{v}_{h},\xi_{\delta}^{n+1})     
+ \int_{\Gamma} \frac{\nu \alpha \sqrt{d}}{\sqrt{trace(\prod)}} (\xi_{1}^{n+1} \cdot \tau_{i})(\textbf{v}_{h} \cdot \tau_{i})     \notag \\
&=(w_{u}^{n+1},\textbf{v}_{h})
-g \int_{\Gamma} (\widetilde{\phi}^{n+1}-\phi_{h}^{n})\textbf{v}_{h} \cdot \textbf{n}_{f} , 
\end{align}
%%%%%%%%%%%%%%%%%%%%%%%%%%%%%%%%%%%%%%%%%%%%%%%%%%%%%E2
\begin{align}\label{E2}
(\frac{\xi_{2}^{n+1}-\xi_{2}^{n}}{\Delta t},\psi_{h})
+ \frac{1}{S_{0}}(\mathbb{K}^{\epsilon}(\textbf{x})\nabla \xi_{2}^{n+1},\nabla 
\psi_{h})_{\Omega_{p}}
= (w_{\phi}^{n+1},\psi_{h})
+\frac{1}{S_{0}}\int_{\Gamma} (\widetilde{\textbf{u}}^{n+1}
 -\textbf{u}_{h}^{n})\cdot \textbf{n}_{f}\psi_{h}.
\end{align}
Taking $\textbf{v}_{h}=2\Delta t \xi_{1}^{n+1}$ in (\ref{E1}) and using the divergence free property, as well as Korn's inequality, we obtain 
%%%%%%%%%%%%%%%%%%%%%%%%%%%%%%%%%%%%%%%%%%%%%%%%%%%%%%%E3
\begin{align}\label{E3}
&\|\xi_{1}^{n+1}\|_{0}^{2} 
-\|\xi_{1}^{n}\|_{0}^{2}
 +\|\xi_{1}^{n+1}-\xi_{1}^{n}\|_{0}^{2} 
 +4C_{1}\nu \Delta t  \|\xi_{1}^{n+1} \|_{1}^{2}
 +\int_{\Gamma} \frac{\nu \alpha \sqrt{d}}{\sqrt{trace(\prod)}} (\xi_{1}^{n+1} \cdot \tau_{i})(\xi_{1}^{n+1} \cdot \tau_{i})   \nonumber \\
 & \le 2\Delta t (w_{u}^{n+1},\xi_{1}^{n+1})
 -2g\Delta t \int_{\Gamma} (\widetilde{\phi}^{n+1}-\widetilde{\phi}^{n}+\xi_{2}^{n})\xi_{1}^{n+1} \cdot \textbf{n}_{f} .
\end{align}
Then taking $\psi_{h}=2\Delta t \xi_{2}^{n+1}$ and using the uniform boundedness of $\mathbb{K}^{\epsilon}(\textbf{x}),$ we obtain
%%%%%%%%%%%%%%%%%%%%%%%%%%%%%%%%%%%%%%%%%%%%%%%%%%%%%%%E4
\begin{align}\label{E4}
&\|\xi_{2}^{n+1}\|_{0}^{2} 
-\|\xi_{2}^{n}\|_{0}^{2}
 +\|\xi_{2}^{n+1}-\xi_{2}^{n}\|_{0}^{2} 
 +\frac{2\lambda_{min}}{S_{0}}\Delta t\|\xi_{2}^{n+1}\|_{1}^{2}, \nonumber  \\
 &\le 2\Delta t (w_{\phi}^{n+1},\xi_{2}^{n+1})
 + \frac{2\Delta t}{S_{0}}
  \int_{\Gamma} (\widetilde{\textbf{u}}^{n+1}-\widetilde{\textbf{u}}^{n}
 +\xi_{1}^{n})\cdot \textbf{n}_{f}\xi_{2}^{n+1}.
\end{align}
Combining (\ref{E3}) and (\ref{E4}), we have 
%%%%%%%%%%%%%%%%%%%%%%%%%%%%%%%%%%%%%%%%%%%%%%%%%%%%%%% E5
\begin{align}\label{E5}
\|\xi_{1}^{n+1}\|_{0}^{2} 
-\|\xi_{1}^{n}\|_{0}^{2}
&+\|\xi_{1}^{n+1}-\xi_{1}^{n}\|_{0}^{2} 
+\|\xi_{2}^{n+1}\|_{0}^{2} 
-\|\xi_{2}^{n}\|_{0}^{2}
+\|\xi_{2}^{n+1}-\xi_{2}^{n}\|_{0}^{2}\notag \\
&+4C_{1}\nu \Delta t  \|\xi_{1}^{n+1}\|_{1}^{2}
+\frac{2\lambda_{min}}{S_{0}}\Delta t \| \xi_{2}^{n+1}\|_{1}^{2} \notag\\
\le &2\Delta t (w_{u}^{n+1},\xi_{1}^{n+1})
+2\Delta t (w_{\phi}^{n+1},\xi_{2}^{n+1})  \nonumber\\
&-2g\Delta t \int_{\Gamma} (\widetilde{\phi}^{n+1}-\widetilde{\phi}^{n})\xi_{1}^{n+1}\cdot \textbf{n}_{f} 
+\frac{2\Delta t}{S_{0}}\int_{\Gamma} (\widetilde{\textbf{u}}^{n+1}-\widetilde{\textbf{u}}^{n}
)\cdot \textbf{n}_{f}\xi_{2}^{n+1} \nonumber\\
&-2g\Delta t \int_{\Gamma} \xi_{2}^{n} \xi_{1}^{n+1}\cdot \textbf{n}_{f}
+\frac{2\Delta t}{S_{0}}\int_{\Gamma} \xi_{1}^{n}\cdot \textbf{n}_{f}\xi_{2}^{n+1}. 
\end{align}
%The first two terms on the right hand side of (\ref{E5}), by using H$\ddot{o}$lder, poincar$\acute{e}$, Young inequalities and (\ref{wu1})-(\ref{wphi2}), we arrive at
By applying $\text {Hölder}$, $\text {Poincaré}$, Young inequalities and (\ref{wu1})-(\ref{wphi2}), we can simplify the first two terms on the right-hand side of (\ref{E5}). This simplification results in 
%%%%%%%%%%%%%%%%%%%%%%%%%%%%%%%%%%%%%%%%%%%%%%%%%%%%%%% E6
\begin{align}\label{E6}
2\Delta t &(w_{u}^{n+1},\xi_{1}^{n+1})
+2\Delta t (w_{\phi}^{n+1},\xi_{2}^{n+1}) \nonumber\\
\le & \frac{11C_{1}\nu}{4}\Delta t \| \xi_{1}^{n+1}\|_{1}^{2}  
+\frac{7\lambda_{min}}{6S_{0}}\Delta t \| \xi_{2}^{n+1}\|_{1}^{2}
+ \frac{4C_{p}^{2}}{11C_{1}\nu}\Delta t \|w_{u}^{n+1}\|_{0}^{2}
+ \frac{6S_{0}\widetilde{C}_{p}^{2}}{7\lambda_{min}}\Delta t \|w_{\phi}^{n+1}\|_{0}^{2}    \nonumber\\
\le & \frac{11C_{1}\nu}{4}\Delta t \| \xi_{1}^{n+1}\|_{1}^{2}
+\frac{7\lambda_{min}}{6S_{0}}\Delta t \| \xi_{2}^{n+1}\|_{1}^{2}\nonumber\\
&+\frac{4C_{p}^{2}}{11C_{1}\nu}\int_{t^{n}}^{t^{n+1}} \|(P_{h}-I)\textbf{u}_{t}\|_{0}^{2} dt
+\frac{4C_{p}^{2}}{11C_{1}\nu} \Delta t^{2} \int_{t^{n}}^{t^{n+1}} \|\textbf{u}_{tt}\|_{0}^{2} dt     \nonumber\\
&+\frac{6S_{0}\widetilde{C}_{p}^{2}}{7\lambda_{min}} \int_{t^{n}}^{t^{n+1}} \|(P_{h}-I)\phi_{t}\|_{0}^{2} dt  
+\frac{6S_{0}\widetilde{C}_{p}^{2}}{7\lambda_{min}}   \Delta t^{2} \int_{t^{n}}^{t^{n+1}} \|\phi_{tt}\|_{0}^{2} dt.    
\end{align}
For the second two terms on the right-hand side of (\ref{E5}), by using $\text {Hölder}$ inequality, $\text {Poincaré}$ inequality, Young inequality and (\ref{Pb1})-(\ref{Pb2}), it gives that
%%%%%%%%%%%%%%%%%%%%%%%%%%%%%%%%%%%%%%%%%%%%%%%%%%%%%%% E7
\begin{align}\label{E7}
&-2g\Delta t \int_{\Gamma} (\widetilde{\phi}^{n+1}-\widetilde{\phi}^{n})\xi_{1}^{n+1}\cdot \textbf{n}_{f} 
+\frac{2\Delta t}{S_{0}}\int_{\Gamma} (\widetilde{\textbf{u}}^{n+1}-\widetilde{\textbf{u}}^{n}
)\cdot \textbf{n}_{f}\xi_{2}^{n+1} \notag\\
 &\le \frac{C_{1}\nu}{4}\Delta t \| \xi_{1}^{n+1}\|_{1}^{2}  
+\frac{\lambda_{min}}{12S_{0}}\Delta t \| \xi_{2}^{n+1}\|_{1}^{2}    \notag\\
& \quad+\frac{4g^{2}C_{t}^{2}\widetilde{C}_{t}^{2}C_{p}^{2}\widetilde{C}_{p}^{2}}{C_{1}\nu}\Delta t\| \widetilde{\phi}^{n+1}-\widetilde{\phi}^{n}\|_{1}^{2}
+\frac{12C_{t}^{2}\widetilde{C}_{t}^{2}C_{p}^{2}\widetilde{C}_{p}^{2}}{S_{0}\lambda_{min}} \Delta t \| \widetilde{\textbf{u}}^{n+1}-\widetilde{\textbf{u}}^{n}\|_{1}^{2} \notag\\
&\le \frac{C_{1}\nu}{4}\Delta t \| \xi_{1}^{n+1}\|_{1}^{2}  
+\frac{\lambda_{min}}{12S_{0}}\Delta t \| \xi_{2}^{n+1}\|_{1}^{2} \notag\\
&\quad +\frac{4Cg^{2}C_{t}^{2}\widetilde{C}_{t}^{2}C_{p}^{2}\widetilde{C}_{p}^{2}}{C_{1}\nu}\Delta t^{2} \int_{t^{n}}^{t^{n+1}} \| \phi_{t}\|_{1}^{2} dt
+\frac{12CC_{t}^{2}\widetilde{C}_{t}^{2}C_{p}^{2}\widetilde{C}_{p}^{2}}{S_{0}\lambda_{min}} \Delta t^{2}\int_{t^{n}}^{t^{n+1}} \| \textbf{u}_{t}\|_{1}^{2} dt.
\end{align}
For the last two terms on the right-hand side of (\ref{E5}), using $\text {Hölder}$, trace, and Young inequalities, we have 
%%%%%%%%%%%%%%%%%%%%%%%%%%%%%%%%%%%%%%%%%%%%%%%%%%%%%%% E8
\begin{align}\label{E8}
-&2g\Delta t \int_{\Gamma} \xi_{2}^{n} \xi_{1}^{n+1}\cdot \textbf{n}_{f}
+\frac{2\Delta t}{S_{0}}\int_{\Gamma} \xi_{1}^{n}\cdot \textbf{n}_{f}\xi_{2}^{n+1}\nonumber \\
\le& \frac{C_{1}\nu}{2}\Delta t \| \xi_{1}^{n+1}\|_{1}^{2}
+\frac{\lambda_{min}}{2S_{0}}\Delta t \| \xi_{2}^{n+1}\|_{1}^{2} 
+\frac{\lambda_{min}}{4S_{0}}\Delta t \| \xi_{2}^{n}\|_{1}^{2} 
+\frac{C_{1}\nu}{2}\Delta t \| \xi_{1}^{n}\|_{1}^{2}  \nonumber\\
&+\frac{g^{2}C_{t}^{2}\widetilde{C}_{t}^{2}\sqrt{S_{0}}}{\sqrt{2C_{1}\nu \lambda_{min}}}\Delta t(\|\xi_{1}^{n+1}\|_{0}^{2}+\|\xi_{2}^{n}\|_{0}^{2})
+\frac{C_{t}^{2}\widetilde{C}_{t}^{2}}{2\sqrt{S_{0}^{3}C_{1}\nu \lambda_{min}}}\Delta t (\|\xi_{1}^{n}\|_{0}^{2}+\|\xi_{2}^{n+1}\|_{0}^{2}).
\end{align}
Considering (\ref{E6})-(\ref{E8}) and (\ref{E5}), we can get 
%%%%%%%%%%%%%%%%%%%%%%%%%%%%%%%%%%%%%%%%%%%%%%%%%%%%%%% E9
\begin{align} \label{E9}
\|\xi_{1}^{n+1}\|_{0}^{2} 
-&\|\xi_{1}^{n}\|_{0}^{2}
+\|\xi_{1}^{n+1}-\xi_{1}^{n}\|_{0}^{2} 
+\|\xi_{2}^{n+1}\|_{0}^{2} 
-\|\xi_{2}^{n}\|_{0}^{2}
+\|\xi_{2}^{n+1}-\xi_{2}^{n}\|_{0}^{2}  \notag \\
&+\frac{C_{1}\nu}{2}\Delta t ( \|\xi_{1}^{n+1}\|_{1}^{2} 
  -\|\xi_{1}^{n}\|_{1}^{2} )
+\frac{\lambda_{min}}{4S_{0}}\Delta t 
(\| \xi_{2}^{n+1}\|_{1}^{2}
  -\| \xi_{2}^{n}\|_{1}^{2})   \nonumber\\
\le & \frac{g^{2}C_{t}^{2}\widetilde{C}_{t}^{2}\sqrt{S_{0}}}{\sqrt{2C_{1}\nu \lambda_{min}}}\Delta t(\|\xi_{1}^{n+1}\|_{0}^{2}+\|\xi_{2}^{n}\|_{0}^{2})
+\frac{C_{t}^{2}\widetilde{C}_{t}^{2}}{2\sqrt{S_{0}^{3}C_{1}\nu \lambda_{min}}}\Delta t (\|\xi_{1}^{n}\|_{0}^{2}+\|\xi_{2}^{n+1}\|_{0}^{2})  \nonumber\\
&+\frac{4C_{p}^{2}}{11C_{1}\nu}\int_{t^{n}}^{t^{n+1}} \|(P_{h}-I)\textbf{u}_{t}\|_{0}^{2} dt
+\frac{4C_{p}^{2}}{11C_{1}\nu} \Delta t^{2} \int_{t^{n}}^{t^{n+1}} \|\textbf{u}_{tt}\|_{0}^{2} dt     \nonumber\\
&+\frac{6S_{0}\widetilde{C}_{p}^{2}}{7\lambda_{min}} \int_{t^{n}}^{t^{n+1}} \|(P_{h}-I)\phi_{t}\|_{0}^{2} dt  
+\frac{6S_{0}\widetilde{C}_{p}^{2}}{7\lambda_{min}}   \Delta t^{2} \int_{t^{n}}^{t^{n+1}} \|\phi_{tt}\|_{0}^{2} dt. \notag \\
&+\frac{4Cg^{2}C_{t}^{2}\widetilde{C}_{t}^{2}C_{p}^{2}\widetilde{C}_{p}^{2}}{C_{1}\nu}\Delta t^{2} \int_{t^{n}}^{t^{n+1}} \| \phi_{t}\|_{1}^{2} dt
+\frac{12CC_{t}^{2}\widetilde{C}_{t}^{2}C_{p}^{2}\widetilde{C}_{p}^{2}}{S_{0}\lambda_{min}} \Delta t^{2}\int_{t^{n}}^{t^{n+1}} \| \textbf{u}_{t}\|_{1}^{2} dt.
\end{align}
Summing (\ref{E9}) from $n=0$ to $N-1$, we have 
%%%%%%%%%%%%%%%%%%%%%%%%%%%%%%%%%%%%%%%%%%%%%%%%%%%%%%% E10
\begin{align} \label{E10}
\|\xi_{1}^{N}\|_{0}^{2}
+&\|\xi_{2}^{N}\|_{0}^{2}
+\sum\limits_{n=0}^{N-1}(\|\xi_{1}^{n+1}-\xi_{1}^{n}\|_{0}^{2} 
+\|\xi_{2}^{n+1}-\xi_{2}^{n}\|_{0}^{2})
+\frac{C_{1}\nu}{2}\Delta t \|\xi_{1}^{N} \|_{1}^{2} 
+\frac{\lambda_{min}}{4S_{0}}\Delta t \| \xi_{2}^{N}\|_{1}^{2}\notag \\
\le \widetilde{C}\Delta t & \sum\limits_{n=0}^{N-1} (\|\xi_{1}^{n+1}\|_{0}^{2}+\|\xi_{2}^{n+1}\|_{0}^{2}) 
+\|\xi_{1}^{0}\|_{0}^{2}
+\|\xi_{2}^{0}\|_{0}^{2}
+\frac{C_{1}\nu}{2}\Delta t  \|\xi_{1}^{0} \|_{1}^{2} 
+\frac{\lambda_{min}}{4S_{0}}\Delta t \| \xi_{2}^{0}\|_{1}^{2}   \notag\\
&+\frac{4C_{p}^{2}}{11C_{1}\nu}  \|(P_{h}-I)\textbf{u}_{t}\|_{L^{2}(0,T;L^{2})}^{2}
 +\frac{4C_{p}^{2}}{11C_{1}\nu}  \Delta t^{2}  \|\textbf{u}_{tt}\|_{L^{2}(0,T;L^{2})}^{2} \notag     \\
 &+\frac{6S_{0}\widetilde{C}_{p}^{2}}{7\lambda_{min}}  \|(P_{h}-I)\phi_{t}\|_{L^{2}(0,T;L^{2})}^{2}
+\frac{6S_{0}\widetilde{C}_{p}^{2}}{7\lambda_{min}} \Delta t^{2}\|\phi_{tt}\|_{L^{2}(0,T;L^{2})}^{2}\notag \\
&+\frac{4Cg^{2}C_{t}^{2}\widetilde{C}_{t}^{2}C_{p}^{2}\widetilde{C}_{p}^{2}}{C_{1}\nu}\Delta t^{2}\|\phi_{t}\|_{L^{2}(0,T;H^{1})}^{2}
+\frac{12CC_{t}^{2}\widetilde{C}_{t}^{2}C_{p}^{2}\widetilde{C}_{p}^{2}}{S_{0}\lambda_{min}} \Delta t^{2}\|\textbf{u}_{t}\|_{L^{2}(0,T;H^{1})}^{2}.
\end{align}
When $\widetilde{C}\Delta t  \textless 1$, using Gronwall inequality we can get the following inequality,
%%%%%%%%%%%%%%%%%%%%%%%%%%%%%%%%%%%%%%%%%%%%%%%%%%%%%%% E11
\begin{align} \label{E11}
\|\xi_{1}^{N}\|_{0}^{2}
+&\|\xi_{2}^{N}\|_{0}^{2}
+\sum\limits_{n=0}^{N-1}(\|\xi_{1}^{n+1}-\xi_{1}^{n}\|_{0}^{2} 
+\|\xi_{2}^{n+1}-\xi_{2}^{n}\|_{0}^{2})
+\frac{C_{1}\nu}{2}\Delta t \|\xi_{1}^{N} \|_{1}^{2}
+\frac{\lambda_{min}}{4S_{0}}\Delta t \| \xi_{2}^{N}\|_{1}^{2} \notag \\
\le& C(T)(\|\xi_{1}^{0}\|_{0}^{2}
+\|\xi_{2}^{0}\|_{0}^{2}
+\frac{C_{1}\nu}{2}\Delta t  \|\xi_{1}^{0} \|_{1}^{2} 
+\frac{\lambda_{min}}{4S_{0}}\Delta t \| \xi_{2}^{0}\|_{1}^{2}   \nonumber \\
&+\frac{4C_{p}^{2}}{11C_{1}\nu}  \|(P_{h}-I)\textbf{u}_{t}\|_{L^{2}(0,T;L^{2})}^{2}
 +\frac{4C_{p}^{2}}{11C_{1}\nu}  \Delta t^{2}  \|\textbf{u}_{tt}\|_{L^{2}(0,T;L^{2})}^{2} \notag     \\
 &+\frac{6S_{0}\widetilde{C}_{p}^{2}}{7\lambda_{min}}  \|(P_{h}-I)\phi_{t}\|_{L^{2}(0,T;L^{2})}^{2}
+\frac{6S_{0}\widetilde{C}_{p}^{2}}{7\lambda_{min}}\Delta t^{2} \|\phi_{tt}\|_{L^{2}(0,T;L^{2})}^{2}\notag \\
&+\frac{4Cg^{2}C_{t}^{2}\widetilde{C}_{t}^{2}C_{p}^{2}\widetilde{C}_{p}^{2}}{C_{1}\nu}\Delta t^{2}\|\phi_{t}\|_{L^{2}(0,T;H^{1})}^{2}
+\frac{12CC_{t}^{2}\widetilde{C}_{t}^{2}C_{p}^{2}\widetilde{C}_{p}^{2}}{S_{0}\lambda_{min}} \Delta t^{2}\|\textbf{u}_{t}\|_{L^{2}(0,T;H^{1})}^{2}).
\end{align}
Therefore, 
%%%%%%%%%%%%%%%%%%%%%%%%%%%%%%%%%%%%%%%%%%%%%%%%%%%%%%% E12
\begin{align} \label{E12}
&\|\xi_{1}^{N}\|_{0}^{2}
+\|\xi_{2}^{N}\|_{0}^{2} \notag \\
\le& C(T)(\|\xi_{1}^{0}\|_{0}^{2}
+\|\xi_{2}^{0}\|_{0}^{2}
+\frac{C_{1}\nu}{2}\Delta t  \|\xi_{1}^{0} \|_{1}^{2} 
+\frac{\lambda_{min}}{4S_{0}}\Delta t \| \xi_{2}^{0}\|_{1}^{2}   \nonumber \\
&+\frac{4C_{p}^{2}}{11C_{1}\nu}  \|(P_{h}-I)\textbf{u}_{t}\|_{L^{2}(0,T;L^{2})}^{2}
+\frac{6S_{0}\widetilde{C}_{p}^{2}}{7\lambda_{min}}  \|(P_{h}-I)\phi_{t}\|_{L^{2}(0,T;L^{2})}^{2} \notag     \\
 & +\frac{4C_{p}^{2}}{11C_{1}\nu}  \Delta t^{2}  \|\textbf{u}_{tt}\|_{L^{2}(0,T;L^{2})}^{2}
+\frac{6S_{0}\widetilde{C}_{p}^{2}}{7\lambda_{min}}\Delta t^{2} \|\phi_{tt}\|_{L^{2}(0,T;L^{2})}^{2}\notag \\
&+\frac{4Cg^{2}C_{t}^{2}\widetilde{C}_{t}^{2}C_{p}^{2}\widetilde{C}_{p}^{2}}{C_{1}\nu}\Delta t^{2}\|\phi_{t}\|_{L^{2}(0,T;H^{1})}^{2}
+\frac{12CC_{t}^{2}\widetilde{C}_{t}^{2}C_{p}^{2}\widetilde{C}_{p}^{2}}{S_{0}\lambda_{min}} \Delta t^{2}\|\textbf{u}_{t}\|_{L^{2}(0,T;H^{1})}^{2}).
\end{align}
Finally, using (\ref{P1})-(\ref{P2}), (\ref{E12}), the triangle inequality and the error assumpitions for initial data, we can easily obtain the final result (\ref{Error}).
\end{proof}

%%%%%%%%%%%%%%%%%%%%%%%%%%%%%%%%%%%%%%%%%%%%%%%%%%%%%%%%%%%%%%%%%%%%%%%%%%%%%%%%%%%%%
%%%%%%%%%%%%%%%%%%%%%%%%%%%%%%%  第四章  数值实验   %%%%%%%%%%%%%%%%%%%%%%%%%%%%%%%
%%%%%%%%%%%%%%%%%%%%%%%%%%%%%%%%%%%%%%%%%%%%%%%%%%%%%%%%%%%%%%%%%%%%%%%%%%%%%%%%%%%%%%%%%%%%%%%%%%
\section{Numerical experiments}
In this part, we substantiate our analysis through a series of numerical experiments.
For the problems discussed in this paper, constructing an exact solution is challenging. Consequently, we use numerical solution on a fine grid as a substitute for the exact solution. Moreover, all the code presented in this paper is developed using the FreeFEM++ software package \cite{H}.

\subsection{Implementation}
In this context, we elaborate on the specific implementation process of the experiment and define some commonly used symbols.
For ease of comparing different approaches, we introduce the following symbols: FEM represents the standard finite element method, while MsFEM represents the multiscale finite element method. 

%%%%%%%参考解设置
To calculate errors, we first obtain a reference solution using the FEM. Specifically, the Stokes equation utilizes Taylor-Hood elements, and the Darcy equation employs P2 elements. Moreover, when implementing code with FreeFEM++, by loading libraries for parallel computing, such as PETSc or MUMPS, we can efficiently obtain the reference solution.
 When calculating the reference solution, we use $h_{D}$ to represent the mesh size in the porous media region, while $h_{S}$ represents the mesh size in the free flow region.
Due to the multiscale phenomenon in the Darcy region, we choose $h_{D}$ = 1/2048, while in the interior of the Stokes region, there is no multiscale phenomenon, so we choose $h_{S}$ = 1/512. Simultaneously, we fix the time step as $\Delta t=0.01$.

%To calculate errors, first, we need to obtain a reference solution. Here, we use the FEM for the computation. Specifically, the Stokes equation utilizes Taylor-Hood elements, and the Darcy equation employs P2 elements. Moreover, when implementing code with FreeFEM++, by loading libraries for parallel computing, \hl{such as PETSc or MUMPS}, we can efficiently obtain the reference solution. When calculating the reference solution, we use $h_{D}$ to represent the mesh size in the porous media region, while $h_{S}$ represents the mesh size in the free flow region. For the first two examples, due to the multiscale phenomenon in the Darcy region, we choose $h_{D}$ = 1/2048, while in the interior of the Stokes region, there is no multiscale phenomenon, so we choose $h_{S}$ = 1/512. At the same time, we fix the time step as $\Delta t=0.01$.

%%%%%%FEM-MSFEM 设置
 $\bullet$ MsFEM-ImEx: For Algorithm \ref{alg:1}(MsFEM-ImEx), we employ MINI elements for the finite element space in the Stokes problem. The multiscale finite element space in the Darcy problem is spanned by the multiscale basis functions obtained through offline process(Pre-work). In the following, $h$ represents the mesh size (coarse mesh) when solving the Stokes-Darcy model, and $h_{fine}$ represents the mesh size (fine mesh) when solving the multiscale basis functions. Additionally, $nsplit=h/h_{fine}$.

%%%FEM-ImEx设置
$\bullet$ FEM-ImEx: To compare with the FEM, we use the finite element method to compute both the Stokes and Darcy regions. In particular, we choose MINI elements for the Stokes part and P1 elements for the Darcy part as the finite element spaces. 
Similarly, we use FEM-ImEx to represent the case where both regions are solved using the FEM.

%%%%%%FEM-MSFEM 设置
%For Algorithm \ref{alg:1}(MsFEM-ImEx), we employ MINI elements for the finite element space in the Stokes problem. The multiscale finite element space in the Darcy problem is spanned by the multiscale basis functions obtained through offline process(Pre-work). In the following, $h$ represents the mesh size (coarse mesh) when solving the Stokes-Darcy model, and $h_{fine}$ represents the mesh size (fine mesh) when solving the multiscale basis functions. Additionally, $nsplit=h/h_{fine}$.
%%%FEM-ImEx设置
%To compare with the FEM, we use the finite element method to compute both the Stokes and Darcy regions. In particular, we choose MINI elements for the Stokes part and P1 elements for the Darcy part as the finite element spaces. Similarly, we use FEM-ImEx to represent the case where both regions are solved using the FEM.

%%%%%%%%%%%% %%%%%%%%%%%%%%%%%%%%%%% 空间误差符号简化
%%%%FEM-MSFEM
For the sake of symbol simplification, we represent the solutions of the MsFEM-ImEx scheme as $\left({\textbf{u}}^{h,n}_{1},p_{1}^{h,n},\phi^{h,n}_{1}\right)$. Subsequently, we use the following symbols to represent the error between the numerical solution and the reference solution. 
$$
e_{1,\textbf{u}}^{h,n}={\textbf u}^{h,n}_{1}-{\textbf u}(t^{n}), \quad e_{1,\phi}^{h,n}=\phi_{1}^{h,n}-\phi(t^{n}).
$$
Here $u(t^n)$ and $\phi(t^n)$ represent the reference solution.

%%%%%% FEM-MSFEM
In the FEM-ImEx scheme, we denote its solution as $\left(\textbf{u}^{h,n}_{2},p_{2}^{h,n},\phi_{2}^{h,n}\right)$, and then utilize the following symbols:
$$
e_{2,\textbf{u}}^{h,n}={\textbf u}^{h,n}_{2}-{\textbf u}(t^{n}), \quad e_{2,\phi}^{h,n}=\phi_{2}^{h,n}-\phi(t^{n}).
$$

%%%%%%%%%%%%%%%%%%%%%%%%%%%%  时间误差阶数的介绍
Referring to \cite{MuZ}, we propose the method in this paper for calculating the convergence order with respect to the time step $\Delta t$. So we define:
$$
\rho_{v,h,i}=\frac{\left\|v^{h,n}_{1,\Delta t}-v^{h,n}_{1,\frac{\Delta t}{2}}\right\|_{i}}{\left\|v^{h,n}_{1,\frac{\Delta t}{2}}-v^{h,n}_{1,\frac{\Delta t}{4}}\right\|_{i}} 
\approx \frac{4^\gamma-2^\gamma}{2^\gamma-1}=2^\gamma,
$$
where $v=\textbf{u}$ or $\phi$, and $i=0$ or 1. Particularly, $\rho\approx 2$ for $\gamma=1$, when the corresponding order of convergence in time is $O(t)$.

%%%%%%%%%%%%%%%%%%%%%%%%%%%%%%%%%%%%%%%%%%%%%%%%%%%%%%%%%%%%%%%   数值结果  %%%%%%%%%%%%%%%%%%%%%%%%%%%%% 
\subsection{Numerical results}
%%%%%%%%%%%%%%%%%%%%%%%%%%%%%%%%%%%%%%%%
%%%%%%%%%%%%%%%%%%%%%%%%%%%%% Example1 %%%%%%%%%%%%%%%%%%%%%
%%%%%%%%%%%%%%%%%%%%%%%%%%%%%%%%%%%%%%%%
\begin{example}\label{example1}
Let $\Omega_{p}=[0,1]\times[0,1]$, $\Omega_{f}=[0,1]\times[1,2]$ and $\Gamma=(0,1)\times \{1\}$.
In the Darcy region, we set the right hand side $f_{p}=t$, the initial value $\phi(0)=0$ and $\phi=0$ on $\Gamma_{p}$. In addition, we assume $\mathbb{K}^{\epsilon}(\textbf{x})=K^{\epsilon}(\textbf{x})I$, where 
$$ K^{\epsilon}(\textbf{x})=\frac{1}{(2+P\sin(\frac{2\pi x}{\epsilon}))(2+P\cos(\frac{2\pi y}{\epsilon}))},\ (P=1.5).$$
Here P is a parameter that controls the amplitude, and $K^{\epsilon}(\textbf{x})$ is
separable in space.  
Next, we consider the Stokes region. We set the right-hand side $\textbf{f}_{f}=[\textbf{f}_{f1},\textbf{f}_{f2}]$, the initial value, and boundary conditions on $\Gamma_{f}$ as follows:
\begin{align*}
\textbf{f}_{f1} &= (1-2x)(y-1)(\cos(t)-\sin(t)), \\
\textbf{f}_{f2} &= -(x(x-1)+(y-1)^2)\sin(t)+(x(1-x)+(y+1)(y-3))\cos(t), \\
\textbf{u}(0) &= [(1-2x)(y-1),x(x-1)+(y-1)^2], \\
\textbf{u} &= [(1-2x)(y-1)\cos(t),x(x-1)+(y-1)^2\cos(t)] \ \text{on} \ \Gamma_{f}.
\end{align*}
We set T = 1, and all physical parameters $S_{0},\nu ,\alpha, g$ equal to 1.
\end{example}
%%%%%%%%%%%%%%%%%%%%%%%%%%%%%%%%%%%%%%%%%%%%%%%%%%%%%%%%5  表格1：FEM-MSFEM 2到-1阶
\begin{table}[!h]
\caption{Convergence performance of the MsFEM-ImEx scheme at $t^n = 1.0$, with varying h but fixed $\Delta t=0.01$, $h_{fine}=1/2048$ and $\epsilon=0.0085$.}
\label{table1}
\begin{tabular}{c|c|c|c|c|c|c|c|c}	
\hline\hline$h$ & $\left\|e_{1,\textbf{u}}^{h,n}\right\|_0$ & Rate & $\left\|e_{1,\textbf{u}}^{h,n}\right\|_1$ & Rate & $\left\|e_{1,\phi}^{h,n}\right\|_0$ & Rate & $\left\|e_{1,\phi}^{h,n}\right\|_1$ & Rate \\
\hline $1 / 2$ & $3.51 \mathrm{E}-02$ & & $4.40 \mathrm{E}-01$ & & $6.26 \mathrm{E}-02$ & & $4.36 \mathrm{E}-01$ & \\
		$1 / 4$ & $8.92 \mathrm{E}-03$ & 2.0 & $1.99 \mathrm{E}-01$ & 1.1 & $1.82 \mathrm{E}-02$ & 1.8 & $2.39 \mathrm{E}-01$ & 0.87 \\
		$1 / 8$ & $2.22 \mathrm{E}-03$ & 2.0 & $9.54 \mathrm{E}-02$ & 1.1 & $6.77 \mathrm{E}-03$ & 1.4 & $1.41 \mathrm{E}-01$ & 0.77 \\
		$1 / 16$ & $5.92 \mathrm{E}-04$ & 1.9 & $4.70 \mathrm{E}-02$ & 1.0 & $6.07 \mathrm{E}-03$ & 0.16 & $1.22 \mathrm{E}-01$ & 0.21 \\
		$1 / 32$ & $2.29 \mathrm{E}-04$ & 1.4 & $2.35 \mathrm{E}-02$ & 1.0 & $1.00 \mathrm{E}-02$ & -0.72 & $1.55 \mathrm{E}-01$ & -0.34 \\
		$1 / 64$ & $2.39 \mathrm{E}-04$ & -0.064 & $1.20 \mathrm{E}-02$ & 1.0 & $1.92 \mathrm{E}-02$ & -0.93 & $2.21 \mathrm{E}-01$ & -0.51 \\
\hline
\end{tabular}
\end{table}
%%%%%%%%%%%%%%%%%%%%%%%%%%%
%%%%%%%%%%%%%%%%%%%%%%%%%%%%%%%%%%%%%%%%%%%%%%%%%%%%%%%%%% 解读表1：空间2阶&-1
%%%  \phi
In table \ref{table1}, we show the convergence orders in space for the MsFEM-ImEx scheme with fixed $\epsilon=0.0085, h_{fine}=1/2048,$ time step $\Delta t=0.01$ and varying mesh size $h=1/2, 1/4, 1/8, 1/16,$ $1/32, 1/64$.
When $h\textgreater \epsilon$, it can be observed that the convergence order in space for the $L^2$ norm of the error for $\phi$ decreases from $O(h^2)$ to $O(h^{-1})$. This observation is consistent with the theoretical analysis of $L^2$ error estimation. Meanwhile the $H^1$ norm of the error for $\phi$ exhibits a decrease from $O(h)$ to $O(h^{-1/2})$.
%The convergence order in space for the $L^2$ norm of the error for $\phi$ can be observed to decrease from $O(h^2)$ to $O(h^{-1})$, and the $H^{1}$ norm of the error exhibits a decrease from $O(h)$ to $O(h^{-1/2})$.
%%  uL2
Next, we observe the error in the Stokes region. When the grid size $h$ is relatively large, the convergence order of the $L^{2}$ norm error of $\phi$ in space approaches $O(h^2)$, and concurrently the convergence order of the $L^2$ norm error of $\textbf{u}$ in space remains at $O(h^2)$.
However, as the $L^2$ error order of $\phi$ gradually degrades to $O(h^{-1})$, influenced by the Darcy region, the $L^2$ convergence order of $\textbf{u}$ also shows a decline.
%Influenced by the Darcy region, when the grid size $h$ is relatively large, the convergence order of the $L^{2}$ norm error of $\phi$ in space approaches $O(h^2)$, and concurrently, the convergence order of the $L^2$ norm error of $u$ in space remains at $O(h^2)$. However, as the $L^2$ error order of $\phi$ gradually degrades to $O(h^{-1})$, the $L^2$ convergence order of $\textbf{u}$ also shows a decline.
%%% uH1
Additionally, the error in $H^{1}$ norm of $\textbf{u}$ is essentially not influenced, and the convergence order in spatial terms remains at $O(h)$.
%%%%%%%%%%%%%%%%%%%%%%%%%%%%%  表格2：FEM-ImEx
\begin{table}[!h]
\caption{Error of the FEM-ImEx scheme at $t^n=1.0$, with varying $h$ but fixed $\Delta t=0.01$ and $\epsilon=0.0085$.}
\label{table2}
	\begin{tabular}{c|c|c|c|c|c|c|c|c}
\hline\hline$h$ & $\left\|e_{2,\textbf{u}}^{h,n}\right\|_0$ & Rate & $\left\|e_{2,\textbf{u}}^{h,n}\right\|_1$ & Rate & $\left\|e_{2,\phi}^{h,n}\right\|_0$ & Rate & $\left\|e_{2,\phi}^{h,n}\right\|_1$ & Rate \\
		\hline $1 / 2$ & $3.51 \mathrm{E}-02$ & & $4.40 \mathrm{E}-01$ & & $1.21 \mathrm{E}-01$ & & $4.56 \mathrm{E}-01$ & \\
		$1 / 4$ & $8.96 \mathrm{E}-03$ & 2.0 & $1.99 \mathrm{E}-01$ & 1.1 & $8.33 \mathrm{E}-02$ & 0.53 & $3.44 \mathrm{E} -01$ & 0.41 \\
		$1 / 8$ & $2.30 \mathrm{E}-03$ & 2.0 & $9.55 \mathrm{E}-02$ & 1.1 & $7.06 \mathrm{E}-02$ & 0.24 & $3.54 \mathrm{E}-01$ & -0.040 \\
		$1 / 16$ & $8.48 \mathrm{E}-04$ & 1.4 & $4.73 \mathrm{E}-02$ & 1.0 & $5.49 \mathrm{E}-02$ & 0.36 & $3.81 \mathrm{E}-01$ & -0.11 \\
		$1 / 32$ & $6.02 \mathrm{E}-04$ & 0.49 & $2.41 \mathrm{E}-02$ & 0.97 & $4.86 \mathrm{E}-02$ & 0.18 & $3.83 \mathrm{E}-01$ & -0.010 \\
		$1 / 64$ & $5.66 \mathrm{E}-04$ & 0.090 & $1.31 \mathrm{E}-02$ & 0.88 & $5.18 \mathrm{E}-02$ & -0.092 & $3.84 \mathrm{E}-01$ & -0.0019 \\
		\hline
	\end{tabular}
\end{table}

%%%%%%%%%%%%%%%%%%
%%%%%%%%%%%%%%%%%%%%%%%%%%%%%%% 解读表2： FEM -ImEx
To compare with the FEM, we present the spatial errors of the FEM-ImEx scheme in Table \ref{table2}. 
Primarily, it is evident that the $L^2$ and $H^1$ errors of $\phi$ in the FEM-ImEx scheme are significantly higher than those of the MsFEM-ImEx scheme in table \ref{table1}. Additionally, in these two schemes, the $L^2$ error of u is smaller in the MsFEM-ImEx scheme, and $H^1$ error is essentially the same.
Therefore, under the same coarse grid size $h$ and time step $\Delta t$, the accuracy of MsFEM-ImEx is higher. 
%Strictly speaking, the $L^2$ error of u is smaller in the FEM-MSFEM scheme.
 It is important to note that we provide the errors of the FEM-ImEx scheme for reference only, and we do not focus on their variations or error orders.
%%%%%%%%%%%%%%%%%%%%%%%%%%%%%  表格3：eps/H定
\begin{table}[!h]
\caption{Error and convergence order of the MsFEM-ImEx scheme at $t^n=1.0$, with fixed $\Delta t=0.01$, $\frac{\epsilon}{h}=0.32$ and $nsplit=32$.}
\label{table3}
\resizebox{\linewidth}{!}{ 
\begin{tabular}{c|c|c|c|c|c|c|c|c|c}
\hline\hline$h$ & $\epsilon$ & $\left\|e_{1,\textbf{u}}^{h,n}\right\|_0$ & Rate & $\left\|e_{1,\textbf{u}}^{h,n}\right\|_1$ & Rate & $\left\|e_{1,\phi}^{h,n}\right\|_0$ & Rate & $\left\|e_{1,\phi}^{h,n}\right\|_1$ & Rate \\
			\hline $1 / 8$ & 0.04 & $2.25 \mathrm{E}-03$ & & 0.055436 & & 0.0184429 & & 0.213888 & \\
			$1 / 16$ & 0.02 & $6.21 \mathrm{E}-04$ & 1.9 & 0.0470126 & 1.0 & 0.0144562 & 0.35 & 0.180426 & 0.25 \\
			$1 / 32$ & 0.01 & $2.57 \mathrm{E}-04$ & 1.3 & 0.0234862 & 1.0 & 0.0135232 & 0.096 & 0.171474 & 0.073 \\
			$1 / 64$ & 0.005 & $1.77 \mathrm{E}-04$ & 0.54 & 0.0118952 & 0.98 & 0.0132466 & 0.03 & 0.169347 & 0.018 \\
\hline
\end{tabular}
}
\end{table}
%\resizebox{\linewidth}{!}用于调整表格或图形的大小，使其适应页面宽度。

%%%%%%%%%%%%%%%%%%%%%%%%   解读表3： eps/H定
Analyzing the data in Table \ref{table3}, where we fix $\Delta t=0.01$, $\frac{\epsilon}{h}=0.32$ and $nsplit=32$, we can observe that in space, the $L^2$ and $H^1$ errors of $\phi$ essentially remain unchanged. This indicates that the term $O(\frac{\epsilon}{h})$ is included in our error estimates, consistent with the theoretical result (\ref{Error}). At the same time, $\textbf{u}$ is impacted by the Darcy region, resulting in a reduction in the convergence order.
%%%%%%%%%%%%%%%%%%%%%%%%%%%%%  表格4：O(t)
\begin{table}[!h]
\caption{Convergence performance of the MsFEM-ImEx scheme at $t^n=1.0$, with varying $\Delta t$ but fixed $h=1/8$, $nsplit=8$ and $\epsilon=0.0085$.}
\label{table4}
\resizebox{\linewidth}{!}{                
\begin{tabular}{c|c|r|c|r|c|c|c|c}
\hline\hline$\Delta t$ & $\left\|\mathbf{u}_{1,\Delta t}^{h,n}-\mathbf{u}_{1,\frac{\Delta t}{2}}^{h,n}\right\|_{0}$ & $\rho_{\textbf{u},h,0}$ & $\left\|\mathbf{u}_{1,\Delta t}^{h,n}-\mathbf{u}_{1,\frac{\Delta t}{2}}^{h,n}\right\|_{1}$ & $\rho_{\textbf{u},h,1}$ & $\left\|\phi_{1,\Delta t}^{h,n}-\phi_{1,\frac{\Delta t}{2}}^{h,n}\right\|_{0}$ & $\rho_{\phi,h,0}$ & $\left\|\phi_{1,\Delta t}^{h,n}-\phi_{1,\frac{\Delta t}{2}}^{h,n}\right\|_{1}$ & $\rho_{\phi,h,1}$ \\
\hline0.1 & $1.66 \mathrm{E}-05$ & 1.9 & $1.66 \mathrm{E}-04$ & 1.9 & $1.48 \mathrm{E}-03$ & 2.0 & $6.73 \mathrm{E}-03$ & 2.0 \\
				0.05 & $8.64 \mathrm{E}-06$ & 2.0 & $8.61 \mathrm{E}-05$ & 2.0 & $7.43 \mathrm{E}-04$ & 2.0 & $3.42 \mathrm{E}-03$ & 2.0 \\
				0.025 & $4.41 \mathrm{E}-06$ & 2.0 & $4.40 \mathrm{E}-05$ & 2.0 & $3.72 \mathrm{E}-04$ & 2.0 & $1.72 \mathrm{E}-03$ & 2.0 \\
				0.0125 & $2.23 \mathrm{E}-06$ & 2.0 & $2.23 \mathrm{E}-05$ & 2.0 & $1.86 \mathrm{E}-04$ & 2.0 & $8.62 \mathrm{E}-04$ & 2.0 \\
				0.00625 & $1.12 \mathrm{E}-06$ & & $1.12 \mathrm{E}-05$ & & $9.29 \mathrm{E}-05$ & & $4.32 \mathrm{E}-04$ & \\
\hline
\end{tabular}
}
\end{table}

%%%%%%%%%%%%%%%%%%%%%%%%%%%%%%%%  解读表4 dt阶数
To examine the convergence order with respect to time, we fix $h=1/8$, $nsplit=8$ and $\epsilon=0.0085,$ while varying $\Delta t$ as 0.1, 0.05, 0.025, 0.0125, and 0.0625.
The value of $\rho_{\textbf{u},h,0}$, $\rho_{\textbf{u},h,1}$,  $\rho_{\phi,h,0}$ and 
$\rho_{\phi,h,1}$ are all approximately 2 in table \ref{table4}. It is suggested that the orders of convergence in time for $\textbf{u}$ and $\phi$ should be $O(t)$, implying that the error estimates for $\textbf{u}$ and $\phi$ in $L^2$ norm are optimal. 
%%%%%%%%%%%%%%%%%%%%%%%%%%%%%%%%%%%%%%%%
%%%%%%%%%%%%%%%%%%%%%%%%%%%%% Example2 %%%%%%%%%%%%%%%%%%%%%
%%%%%%%%%%%%%%%%%%%%%%%%%%%%%%%%%%%%%%%%
\begin{example}
In this example, we choose the permeability coefficient 
$$ K^{\epsilon}(\textbf{x})=\frac{1}{4.0+P(sin(\frac{2\pi x}{\epsilon})+ sin(\frac{2\pi y}{\epsilon}))},\ (P=1.8).$$
Here $K^{\epsilon}(\textbf{x})$ is inseparable in space.
All configurations are consistent with those in Example \ref{example1}, with the exception of $ K^{\epsilon}(\textbf{x})$.
\end{example}
%%%%%%%%%%%%%%%%%%%%%%%%%%%%%%%%%%%%%%  表5：FEM-MSFEM：h2阶到-1阶
\begin{table}[!h]
\caption{Convergence performance of the MsFEM-ImEx scheme at $t^n = 1.0$, with varying h but fixed $\Delta t=0.01$, $h_{fine}=1/2048$ and $\epsilon=0.0085$.}
\label{table5}
\begin{tabular}{c|c|c|c|c|c|c|c|c}	
\hline\hline$h$ & $\left\|e_{1,\textbf{u}}^{h,n}\right\|_0$ & Rate & $\left\|e_{1,\textbf{u}}^{h,n}\right\|_1$ & Rate & $\left\|e_{1,\phi}^{h,n}\right\|_0$ & Rate & $\left\|e_{1,\phi}^{h,n}\right\|_1$ & Rate \\
\hline $1 / 2$ & $3.51 \mathrm{E}-02$ & & $4.40 \mathrm{E}-01$ & & $7.61 \mathrm{E}-02$ & & $5.24 \mathrm{E}-01$ & \\
			$1 / 4$ & $8.99 \mathrm{E}-03$ & 2.0 & $1.99 \mathrm{E}-01$ & 1.1 & $2.14 \mathrm{E}-02$ & 1.8 & $2.86 \mathrm{E}-01$ & 0.87 \\
			$1 / 8$ & $2.27 \mathrm{E}-03$ & 2.0 & $9.59 \mathrm{E}-02$ & 1.1 & $6.76 \mathrm{E}-03$ & 1.7 & $1.60 \mathrm{E}-01$ & 0.84 \\
			$1 / 16$ & $6.11 \mathrm{E}-04$ & 1.9 & $4.73 \mathrm{E}-02$ & 1.0 & $4.47 \mathrm{E}-03$ & 0.60 & $1.20 \mathrm{E}-01$ & 0.41 \\
			$1 / 32$ & $2.19 \mathrm{E}-04$ & 1.5 & $2.37 \mathrm{E}-02$ & 1.0 & $6.74 \mathrm{E}-03$ & -0.59 & $1.42 \mathrm{E}-01$ & -0.24 \\
			$1 / 64$ & $1.84 \mathrm{E}-04$ & 0.25 & $1.21 \mathrm{E}-02$ & 0.97 & $1.30 \mathrm{E}-02$ & -0.95 & $1.99 \mathrm{E}-01$ & -0.49 \\
			\hline
\end{tabular}
\end{table}
%%%%%%%%%%%%%%%%%%%%%%%%
%%%%%%%%%%%%%%%%%%%%%%%%%%%%%%%%%%%%%%%%%%  表6 FEM-ImEx 
\begin{table}[!h]
\caption{Error of the FEM-ImEx scheme at $t^n=1.0$, with varying $h$ but fixed $\Delta t=0.01$ and $\epsilon=0.0085$.}
\label{table6}
\begin{tabular}{c|c|c|c|c|c|c|c|c}
\hline\hline$h$ & $\left\|e_{2,\textbf{u}}^{h,n}\right\|_0$ & Rate & $\left\|e_{2,\textbf{u}}^{h,n}\right\|_1$ & Rate & $\left\|e_{2,\phi}^{h,n}\right\|_0$ & Rate & $\left\|e_{2,\phi}^{h,n}\right\|_1$ & Rate \\
\hline $1 / 2$ & $3.51 \mathrm{E}-02$ & & $4.40 \mathrm{E}-01$ & & $1.34 \mathrm{E}-01$ & & $5.09 \mathrm{E}-01$ & \\
		$1 / 4$ & $9.03 \mathrm{E}-03$ & 2.0 & $1.09 \mathrm{E}-01$ & 1.1 & $7.70 \mathrm{E}-02$ & 0.80 & $3.56 \mathrm{E}-01$ & 0.52 \\
		$1 / 8$ & $2.26 \mathrm{E}-03$ & 2.0 & $9.59 \mathrm{E}-02$ & 1.1 & $6.07 \mathrm{E}-02$ & 0.34 & $3.42 \mathrm{E}-01$ & 0.055 \\
		$1 / 16$ & $7.78 \mathrm{E}-04$ & 1.5 & $4.75 \mathrm{E}-02$ & 1.0 & $4.27 \mathrm{E}-02$ & 0.51 & $3.52 \mathrm{E}-01$ & -0.04 \\
		$1 / 32$ & $4.93 \mathrm{E}-04$ & 0.66 & $2.41 \mathrm{E}-02$ & 0.98 & $3.55 \mathrm{E}-02$ & 0.27 & $3.46 \mathrm{E}-01$ & 0.025 \\
		$1 / 64$ & $4.23 \mathrm{E}-04$ & 0.22 & $1.27 \mathrm{E}-02$ & 0.92 & $3.78 \mathrm{E}-02$ & -0.091 & $3.47 \mathrm{E}-01$ & -0.0022 \\
		\hline
\end{tabular}
\end{table}
%%%%%%%%%%%%%%%%%%%%%%%%%%%%%%%%%%
%%%%%%%%%%%%%%%%%%%%%%%%%%%%%%%%%%%%%%%%%%  表7  ：epsilon/h 
\begin{table}[!h]
\caption{Error and convergence order of the MsFEM-ImEx scheme at $t^n=1.0$, with fixed $\Delta t=0.01$, $\frac{\epsilon}{h}=0.32$ and $nsplit=32$.}
\label{table7}
\resizebox{\linewidth}{!}{ 
\begin{tabular}{c|c|c|c|c|c|c|c|c|c}
\hline\hline$h$ & $\epsilon$ & $\left\|e_{1,\textbf{u}}^{h,n}\right\|_0$ & Rate & $\left\|e_{1,\textbf{u}}^{h,n}\right\|_1$ & Rate & $\left\|e_{1,\phi}^{h,n}\right\|_0$ & Rate & $\left\|e_{1,\phi}^{h,n}\right\|_1$ & Rate \\
		\hline $1 / 8$ & 0.04 & $2.29 \mathrm{E}-03$ & & $9.59 \mathrm{E}-02$ & & $1.52 \mathrm{E}-02$ & & $2.22 \mathrm{E}-01$ & \\
		$1 / 16$ & 0.02 & $6.32 \mathrm{E}-04$ & 1.9 & $4.73 \mathrm{E}-02$ & 1.0 & $1.05 \mathrm{E}-02$ & 0.53 & $1.73 \mathrm{E}-01$ & 0.36 \\
		$1 / 32$ & 0.01 & $2.41 \mathrm{E}-04$ & 1.4 & $2.37 \mathrm{E}-02$ & 1.0 & $9.39 \mathrm{E}-03$ & 0.16 & $1.58 \mathrm{E}-01$ & 0.13 \\
		$1 / 64$ & 0.005 & $1.44 \mathrm{E}-04$ & 0.74 & $1.20 \mathrm{E}-02$ & 0.98 & $9.07 \mathrm{E}-03$ & 0.05 & $1.54 \mathrm{E}-01$ & 0.038 \\
		\hline
\end{tabular}
}
\end{table}
%%%%%%%%%%%%%%%%%%%%%%%
%%%%%%%%%%%%%%%%%%%%%%%%%%%%%%%%%%%%%%%%%  表8
\begin{table}[!h]
	\caption{Convergence performance of the MsFEM-ImEx scheme at $t^n=1.0$, with varying $\Delta t$ but fixed $h=1/8$, $nsplit=8$ and $\epsilon=0.0085$.}
\label{table8}
\resizebox{\linewidth}{!}{                
\begin{tabular}{c|c|r|c|r|c|c|c|c}
\hline\hline$\Delta t$ & $\left\|\mathbf{u}_{1,\Delta t}^{h,n}-\mathbf{u}_{1,\frac{\Delta t}{2}}^{h,n}\right\|_{0}$ & $\rho_{\textbf{u},h,0}$ & $\left\|\mathbf{u}_{1,\Delta t}^{h,n}-\mathbf{u}_{1,\frac{\Delta t}{2}}^{h,n}\right\|_{1}$ & $\rho_{\textbf{u},h,1}$ & $\left\|\phi_{1,\Delta t}^{h,n}-\phi_{1,\frac{\Delta t}{2}}^{h,n}\right\|_{0}$ & $\rho_{\phi,h,0}$ & $\left\|\phi_{1,\Delta t}^{h,n}-\phi_{1,\frac{\Delta t}{2}}^{h,n}\right\|_{1}$ & $\rho_{\phi,h,1}$ \\
		\hline 0.1 & $2.38 \mathrm{E}-05$ & 1.8 & $2.37 \mathrm{E}-04$ & 1.0 & $2.66 \mathrm{E}-03$ & 1.9 & $1.15 \mathrm{E}-02$ & 1.9 \\
		0.05 & $1.29 \mathrm{E}-05$ & 2.0 & $1.29 \mathrm{E}-04$ & 2.0 & $1.40 \mathrm{E}-03$ & 2.0 & $5.98 \mathrm{E}-03$ & 2.0 \\
		0.025 & $6.46 \mathrm{E}-06$ & 2.0 & $6.45 \mathrm{E}-05$ & 2.0 & $6.95 \mathrm{E}-04$ & 2.0 & $3.00 \mathrm{E}-03$ & 2.0 \\
		0.0125 & $3.23 \mathrm{E}-06$ & 2.0 & $3.23 \mathrm{E}-05$ & 2.0 & $3.46 \mathrm{E}-04$ & 2.0 & $1.50 \mathrm{E}-03$ & 2.0 \\
		0.00625 & $1.62 \mathrm{E}-06$ & & $1.61 \mathrm{E}-05$ & & $1.73 \mathrm{E}-04$ & & $7.49 \mathrm{E}-04$ & \\
		\hline
	\end{tabular}
}
\end{table}
%%%%%%%%%%%%%%%%%%%%%%%%%%%%%%%%%%%%%%%%  解读 FEM-MSFEM 表5：h2阶到-1阶
Firstly, we examine the convergence order of the MsFEM-ImEx scheme in space. By varying the mesh size $h$ and keeping $\Delta t=0.01,$ $h_{fine}=1/2048$, and $\epsilon=0.0085$ fixed, we obtain the errors and convergence orders in Table \ref{table5}. When $h\textgreater \epsilon$, the convergence order for the $L^2$ norm error of $\phi$ gradually decreases from $O(h^2)$ to $O(h^{-1}), $ and the convergence order for the $H^1$ norm decreases from $O(h)$ to $O(h^{-1/2})$. 
When transferring information through the interface $\Gamma$, influenced by the Darcy region, with the refinement of the mesh, there is a decrease in the convergence order of the $L^2$ norm error for $\textbf{u},$ while there is essentially no significant impact on the $H^1$ norm error of $\textbf{u}$.
%%%%%%%%%%%%%%%%%%%%%

%%%%%%%%%%%%%%%%%%%%%%%%%%%%%%%%%   解读FEM-ImEx 表6
Then, we present the errors for the FEM-ImEx scheme in table \ref{table6}. Comparing Table \ref{table5}(MsFEM-ImEx) and Table \ref{table6}(FEM-ImEx), it is evident that the $L^2$ and $H^1$ errors of $\phi$ computed using the MsFEM-ImEx scheme is smaller.
The $L^2$ norm error for $\textbf{u}$ is the same in both schemes when the mesh is relatively coarse. However, with mesh refinement, the MsFEM-ImEx scheme yields a smaller $L^2$ error for $\textbf{u}$. The $H^1$ error for $\textbf{u}$ remains essentially consistent in both schemes as the grid size changes.
%For mesh sizes $h=1/2, 1/4,$ and $1/8,$ the $L^2$ norm error for $\textbf{u}$ is the same in both schemes. However, with mesh sizes $h < 1/8,$ specifically $h=1/16, 1/32, 1/64,$ the FEM-MSFEM scheme yields a smaller $L^2$ error for $\textbf{u}$. The $H^1$ error for $\textbf{u}$ in both schemes remains essentially consistent as the grid size changes.

%%%%%%%%%%%%%%%%%%%%%%%%%%%%%%%%%   解读FEM-MSFEM 表7：epsilon/h 固定
Next, from Table \ref{table7}, it can be observed that with grid refinement, the $L^2$ and $H^1$ errors of $\phi$ remains essentially unchanged, consistent with theoretical analysis. However, $\textbf{u}$ is influenced by the Darcy region, leading to a decrease in the convergence order.
%Next, from Table \ref{table7}, it is apparent that the error of $\phi$ remains relatively stable, consistent with our theoretical analysis. 
%%%%%%%%%%%%%%%%%%%%%%%%

%%%%%%%%%%%%%%%%%%%%%%%%%%%%%%%%%   解读  time order 表8
Finally, the data in Table \ref{table8} indicates that the convergence orders for all variables in time are $O(t)$.

%%%%%%%%%%%%%%%%%%%%%%%%%%%%%%%%%%%%%%%%%%%%%%%%%%%%%%
%%%%%%%%%%%%%%%%%%%%%%%%%%%%%%%%%%%%%%%%%%%%%%%%%%%%%%%   Example 3
%%%%%%%%%%%%%%%%%%%%%%%%%%%%%%%%%%%%%%%%%%%%%%%%%%%%%%%
\begin{example}
We consider a modified square cavity flow model \cite{LLCG}. Let $\Omega_{p}=[0,1]\times[0.25,1]$, $\Omega_{f}=[0,1]\times[1,1.25]$ and $\Gamma=(0,1)\times\{1\}$.
In the Darcy region, we set $f_{p}=0$, the initial value $\phi(0)=0$ and $\phi=0$ on $\Gamma_{p}$. In addition, we assume $\mathbb{K}^{\epsilon}(\textbf{x})=K^{\epsilon}(\textbf{x})I$, where 
$$ K^{\epsilon}(\textbf{x})=
\frac{2.0+P\sin(\frac{2\pi x}{\epsilon})}{2.0+P\cos(\frac{2\pi y}{\epsilon})}  +  
\frac{2.0+P\sin(\frac{2\pi y}{\epsilon})}{2.0+P\cos(\frac{2\pi x}{\epsilon})},\ (P=1.5).$$
Then we set the right-hand side, the initial value, and boundary conditions of Stokes region as follows:
\begin{align*}
\textbf{f}_{f}&=[0,0],\\
\textbf{u}(0) &= [\sin(\pi x),0], \\
\textbf{u} &= [\sin(\pi x),0] \quad \text{on} \ (0,1)\times  \{1.25\},\\
\textbf{u} &= [0,0] \quad \text{on}\  \{0\}\times (1,1.25) \cup  \{1\}\times (1,1.25).
\end{align*}
We set T = 1, and all physical parameters $S_{0},\nu ,\alpha, g$ equal to 1.
\end{example}
%%%%%%%%%%%%%%    压力分布
\begin{figure}[h]
\centering
\subfigure[Reference] {\includegraphics[width=0.40\linewidth]{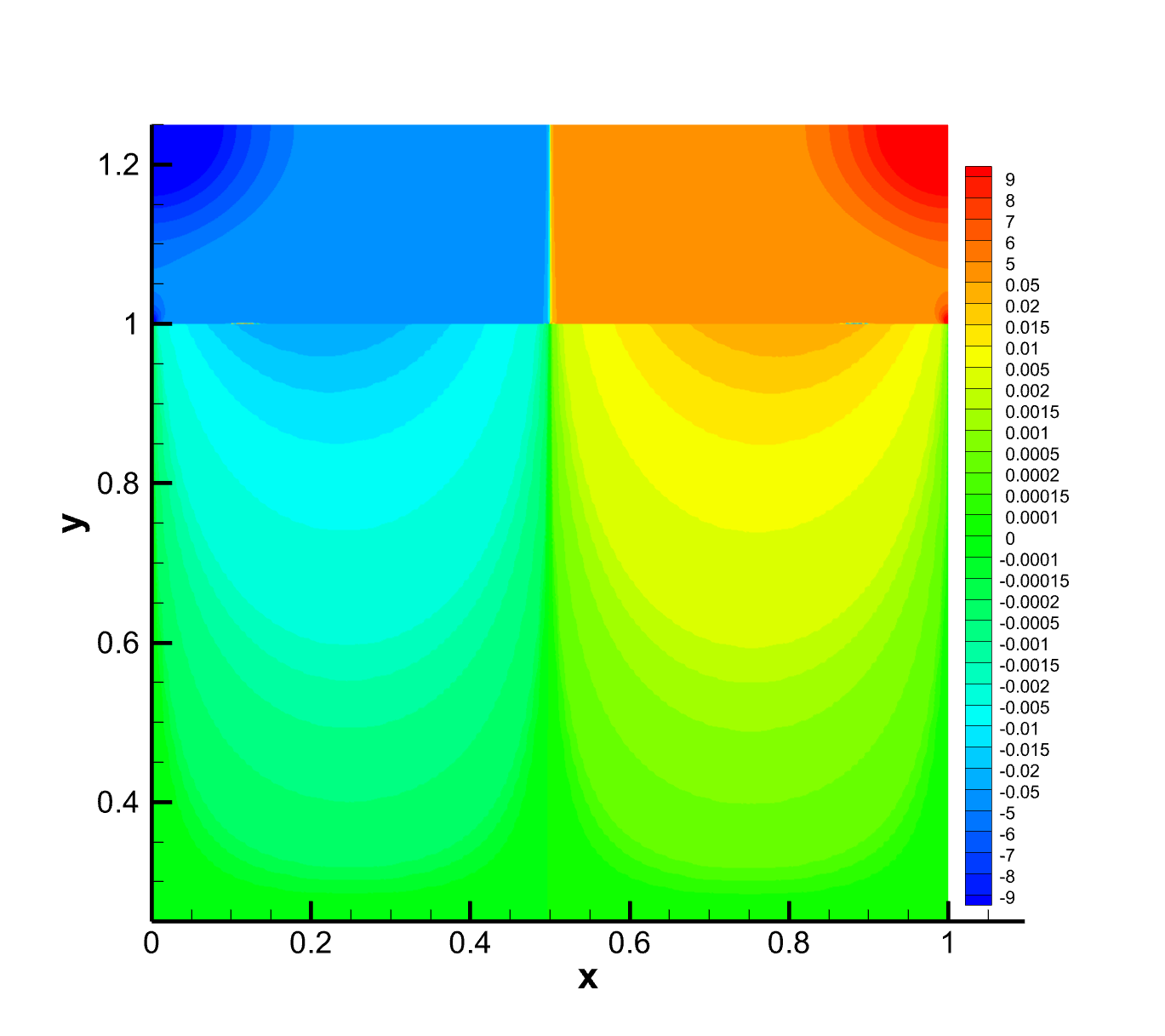}}
\subfigure[MsFEM-ImEx]{\includegraphics[width=0.40\linewidth]{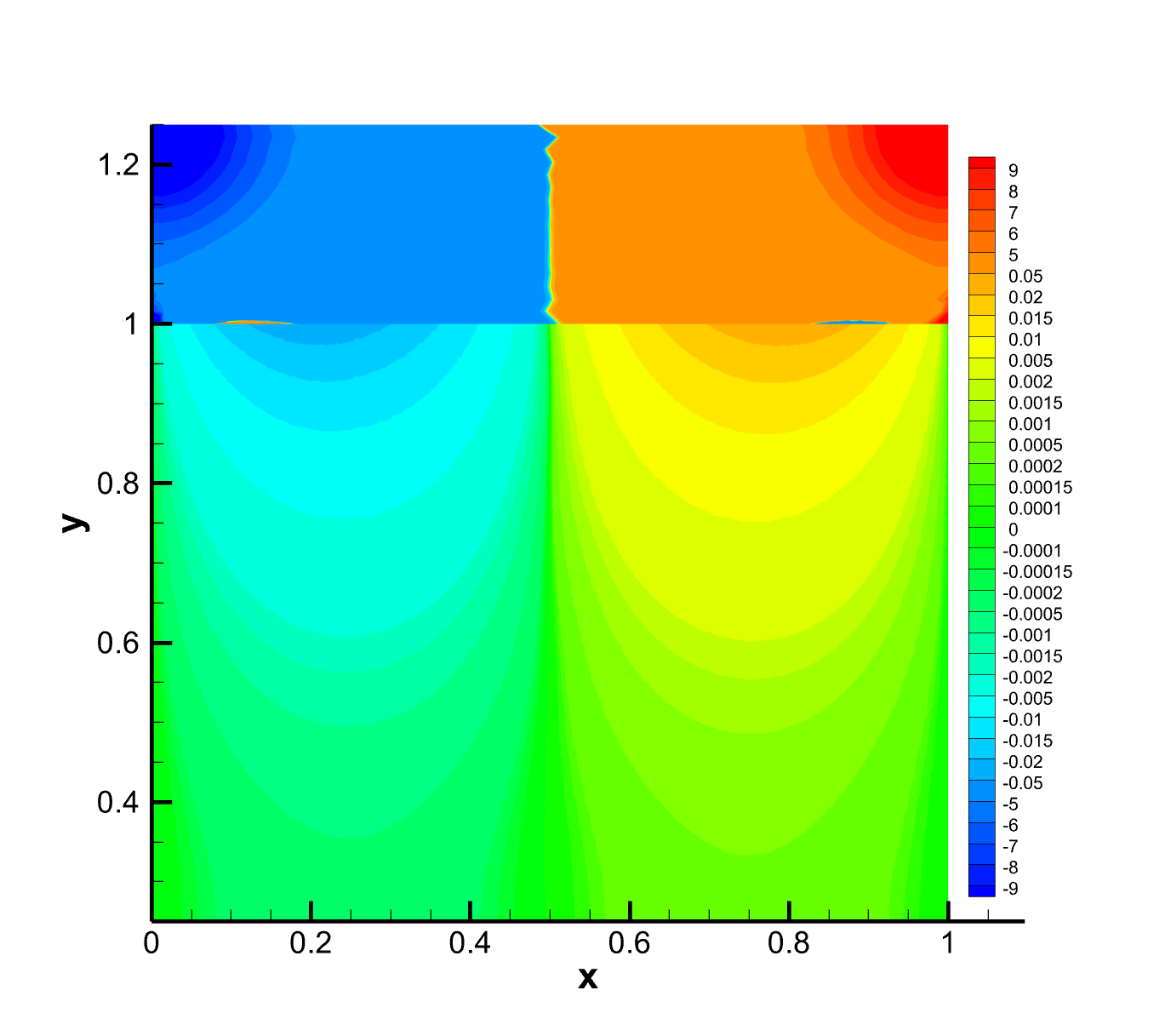}}
\caption{For $\epsilon=0.008$, the pressure distribution at $t=1.0$ for the reference solution(a), the pressure distribution at $t=1.0$, $h=1/64$ with MsFEM-ImEx scheme(b).} %图片标题
\label{fig:pressure}  %图片交叉引用时的标签
\end{figure}
This example illustrates the effectiveness of our proposed algorithm in addressing problems. We choose $\epsilon=0.008$ in this example.
Considering the lack of a true solution, we plot the pressure distribution of the reference solution at $t=1.0$ for the entire region, as shown in Figure \ref{fig:pressure}(a).
At the same time, on a triangular mesh with a grid size of $h=1/64,$ we fixed the time step $\Delta t=0.01$ and $h_{fine}=1/2048$. Then, we utilize Algorithm \ref{alg:1}(MsFEM-ImEx) proposed in this paper to solve this example. The pressure distribution at $t=1.0$ is depicted in Figure \ref{fig:pressure}(b).
By comparing the two pressure distribution figures in Figure \ref{fig:pressure}, it is apparent that their pressure distributions are essentially consistent throughout the entire region. This strongly validates the effectiveness of the Algorithm \ref{alg:1} (MsFEM-ImEx) proposed in this paper.

\section{Conclusion}
In this paper, based on the multiscale finite element method, we propose an algorithm for solving the non-stationary Stokes-Darcy model, where the Darcy region exhibits multiscale characteristics.
We demonstrate the stability of the algorithm and provide an error analysis.
Last, we validate the accuracy and effectiveness of the algorithm through numerical experiments.


\begin{thebibliography}{99}

%%%%%%%%%%%%%%%%%%%%%%
%%%%%%%%%%%%%%%%%%%%%%  B
\bibitem{BJ}
\newblock Beavers, G. S.,Joseph, D. D.:
\newblock Boundary conditions at a naturally permeable wall.
\newblock \emph{J. Fluid Mech.}, 30(1), 197-207 (1967).


%%%%%%%%%%%%%%%%%%%%%%
%%%%%%%%%%%%%%%%%%%%%%  C
\bibitem{CGHW}
\newblock Chen, W., Gunzburger, M., Hua, F., Wang, X.:
\newblock A parallel Robin–Robin domain decomposition method for the Stokes–Darcy system.
\newblock \emph{SIAM J. Numer. Anal.}, 49(3), 1064-1084 (2011).


%%%%%%%%%%%%%%%%%%%%%%
%%%%%%%%%%%%%%%%%%%%%%  D
\bibitem{DMQ}
\newblock  Discacciati, M., Miglio, E., Quarteroni, A.:
\newblock    Mathematical and numerical models for coupling surface and groundwater flows.
\newblock   \emph{Appl. Numer. Math.}, 43, 57-74 (2002).


\bibitem{DZ}
\newblock D'Angelo, C., Zunino, P.:
\newblock A finite element method based on weighted interior penalties for heterogeneous incompressible flows.
\newblock \emph{SIAM J. Numer. Anal.}, 47(5), 3990-4020 (2009).

%%%%%%%%%%%%%%%%%%%%%%
%%%%%%%%%%%%%%%%%%%%%%  E
\bibitem{EH}
\newblock  Efendiev, Y., Hou, T. Y.:
\newblock Multiscale finite element methods: theory and applications.
\newblock \emph{Surveys and Tutorials in the Applied Mathematical Sciences}, Springer(2009).

\bibitem{ELRV}
\newblock Engquist, B., Li, X., Ren, W., Vanden-Eijnden, E.:
\newblock Heterogeneous multiscale methods: a review.
\newblock \emph{Commun. Comput. Phys.}, 2(3), 367-450 (2007).

%%%%%%%%%%%%%%%%%%%%%%
%%%%%%%%%%%%%%%%%%%%%%  F
\bibitem{FEZ}
\newblock Fu, S., Eric, C., Zhao, L.:
\newblock Generalized multiscale finite element method for highly heterogeneous compressible flow.
\newblock \emph{Multiscale. Model. Sim}, 20(4), 1437-1467 (2022).


\bibitem{FMV}
\newblock Franca, L. P., Madureira, A. L., Valentin, F.:
\newblock Towards multiscale functions: enriching finite element spaces with local but not bubble-like functions.
\newblock \emph{Comput. Method. Appl. Mech.}, 194(27-29), 3006-3021 (2005).

%%%%%%%%%%%%%%%%%%%%%%
%%%%%%%%%%%%%%%%%%%%%%  G
\bibitem{GVY}
\newblock Girault, V., Vassilev, D., Yotov, I.:
\newblock Mortar multiscale finite element methods for Stokes–Darcy flows.
\newblock \emph{Numer. Math.}, 127(1), 93-165 (2014).

%%%%%%%%%%%%%%%%%%%%%%
%%%%%%%%%%%%%%%%%%%%%%  H

\bibitem{H}
\newblock Hecht, F.:
\newblock New developments in FreeFem++.
\newblock \emph{J. Numer. Math.}, 20, 251-266 (2012).

\bibitem{HJ}
\newblock He, X., Jiang, L.:
\newblock An upscaling method using coefficient splitting and its applications to elliptic PDEs.
\newblock \emph{Comput. Math. Appl.}, 65(4), 712-730 (2013).

\bibitem{HQ}
\newblock Hou, Y., Qin, Y.:
\newblock On the solution of coupled Stokes/Darcy model with Beavers–Joseph interface condition.
\newblock \emph{Comput. Math. Appl.}, 77(1), 50-65 (2019).

\bibitem{HTW}
\newblock Hou, T. Y., Wu, X. H.:
\newblock A multiscale finite element method for elliptic problems in composite materials and porous media.
\newblock \emph{J. Comput. Phys.}, 134(1), 169-189 (1997).



\bibitem{HWC}
\newblock Hou, T. Y., Wu, X. H., Cai, Z.:
\newblock Convergence of a multiscale finite element method for elliptic problems with rapidly oscillating coefficients.
\newblock \emph{Math. Comput.}, 68(227), 913-943 (1999).



\bibitem{HZZZ}
\newblock Hong, Y., Zhang, W., Zhao, L., Zheng, H.:
\newblock Multiscale finite element method for Stokes-Darcy model.
\newblock \emph{arXiv:2401.03672v1 [math.NA]}, 8 Jan 2024.


%%%%%%%%%%%%%%%%%%%%%%
%%%%%%%%%%%%%%%%%%%%%% L
\bibitem{LLCG}
\newblock Li, R., Li, J., Chen, Z., Gao, Y.: 
\newblock A stabilized finite element method based on two local Gauss integrations for a coupled Stokes–Darcy problem.
\newblock \emph{J. Comput. Appl. Math.}, 292, 92-104 (2016).

\bibitem{LSY}
\newblock Layton, W.J., Schieweck, F., Yotov, I.:
\newblock Coupling fluid flow with porous media flow.
\newblock \emph{SIAM J. Numer. Anal.}, 40(6), 2195–2218 (2002).


%%%%%%%%%%%%%%%%%%%%%%
%%%%%%%%%%%%%%%%%%%%%% M
\bibitem{MJ}
\newblock Mikelic, A., Jäger, W.:
\newblock On the interface boundary condition of Beavers, Joseph, and Saffman.
\newblock \emph{SIAM J. Appl. Math.}, 60(4), 1111-1127 (2000).


\bibitem{MY}
\newblock Ming, P., Yue, X.:
\newblock Numerical methods for multiscale elliptic problems.
\newblock \emph{J. Comput. Phys.}, 214(1), 421-445 (2006).

\bibitem{MZ}
\newblock Ming, P., Zhang, P.:
\newblock Analysis of the heterogeneous multiscale method for parabolic homogenization problems.
\newblock \emph{Math. Comput.}, 76(257), 153-177 (2007).



\bibitem{MS}
\newblock Ming, P., Song, S.:
\newblock Error Estimate of Multiscale Finite Element Method for Periodic Media Revisited.
\newblock \emph{Multiscale Model. Simul.}, 22(1), 106-124 (2024).


\bibitem{MiZ}
\newblock Ming, P., Zhang, P.: 
\newblock Analysis of the heterogeneous multiscale method for elliptic homogenization problems.
\newblock \emph{J. Am. Math. Soc.}, 18(1), 121-156 (2005).

\bibitem{MuZ}
\newblock Mu, M., Zhu, X.:
\newblock Decoupled schemes for a non-stationary mixed Stokes-Darcy model.
\newblock \emph{Math. Comput.}, 79(270), 707–731 (2010).




%%%%%%%%%%%%%%%%%%%%%%
%%%%%%%%%%%%%%%%%%%%%% S
\bibitem{Sa}
\newblock Saffman, P. G.:
\newblock On the boundary condition at the surface of a porous medium.
\newblock \emph{Stud. Appl. Math.}, 50(2), 93-101 (1971).

\bibitem{SXDJ}
\newblock Su, F., Xu, Z., Dong, Q. L., Jiang, H.:		
\newblock Multiscale computation method for parabolic problems of composite materials.
\newblock \emph{Appl. Math. Comput.}, 217(21), 8337-8342 (2011).

\bibitem{SZhang}
\newblock Shan, L., Zhang, Y.:
\newblock Error estimates of the partitioned time stepping method for the evolutionary Stokes–Darcy flows.
\newblock \emph{Comput. Math. Appl.}, 73(4), 713-726 (2017).

\bibitem{SZheng}
\newblock Shan, L., Zheng, H.:
\newblock Partitioned time stepping method for fully evolutionary Stokes--Darcy flow with Beavers--Joseph interface conditions.
\newblock \emph{SIAM J. Numer. Anal.}, 51(2), 813-839 (2013).

\bibitem{SZL}
\newblock Shan, L., Zheng, H., Layton, W. J.:
\newblock A decoupling method with different subdomain time steps for the nonstationary Stokes–Darcy model.
\newblock \emph{Numer. Meth. Part Differ. Equ.}, 29(2), 549-583 (2013).




%%%%%%%%%%%%%%%%%%%%%%
%%%%%%%%%%%%%%%%%%%%%% T




%%%%%%%%%%%%%%%%%%%%%%
%%%%%%%%%%%%%%%%%%%%%% W

\bibitem{WEZ}
\newblock Wang, Y., Eric, C., Zhao, L.:
\newblock Constraint energy minimization generalized multiscale finite element method in mixed formulation for parabolic equations.
\newblock \emph{Math. Comput. Simul.}, 188, 455-475 (2021).



\bibitem{EM}
\newblock E, W., Ming, P.:
\newblock Analysis of multiscale methods.
\newblock \emph{J. Comput. Math.}, 22(2), 210 (2004).


%%%%%%%%%%%%%%%%%%%%%%
%%%%%%%%%%%%%%%%%%%%%% X
\bibitem{XH}
\newblock Xue, D., Hou, Y.:
\newblock A second-order decoupled algorithm with different subdomain time steps for the non-stationary Stokes/Darcy model.
\newblock \emph{Numerical Algorithms}, 88(3), 1137-1182 (2021).


%%%%%%%%%%%%%%%%%%%%%%
%%%%%%%%%%%%%%%%%%%%%% Y
\bibitem{YDC}
\newblock Ye, C., Dong, H., Cui, J.:
\newblock Convergence rate of multiscale finite element method for various boundary problems.
\newblock \emph{J. Comput. Appl. Math.}, 374, 112754 (2020).


%%%%%%%%%%%%%%%%%%%%%%
%%%%%%%%%%%%%%%%%%%%%% Z


\bibitem{ZHSS}
\newblock Zheng, H., Hou, Y., Shi, F., Song, L.:
\newblock A finite element variational multiscale method for incompressible flows based on two local Gauss integrations.
\newblock \emph{J. Comput. Phys.}, 228(16), 5961-5977 (2009).

\bibitem{ZYHW}
Zhang, N., Yao, J., Huang, Z., Wang, Y.:
\newblock Accurate multiscale finite element method for numerical simulation of two-phase flow in fractured media using discrete-fracture model.
\newblock \emph{J. Comput. Phys.}, 242, 420-438 (2013).




\end{thebibliography}
\end{document}